\theoremstyle{plain}
\newtheorem{thm}{Theorem}
\newtheorem{lem}[thm]{Lemma}
\newtheorem{prop}[thm]{Proposition}
\newtheorem{prob}[thm]{Problem}
\newtheorem{cor}[thm]{Corollary}
\newtheorem*{claim}{Claim}
\newcommand{\fakeenv}{} 
\newenvironment{restate}[2]  
{
  \renewcommand{\fakeenv}{#2} 
  \theoremstyle{plain}
  \newtheorem*{\fakeenv}{#1~\ref{#2}} 
  \begin{\fakeenv}
}
{
  \end{\fakeenv}
}
\DeclareMathOperator{\Out}{Out}
\theoremstyle{definition}
\newtheorem{defn}[thm]{Definition}
\newtheorem{eg}[thm]{Example}
\newtheorem{rmk}[thm]{Remark}
\numberwithin{equation}{section}
\numberwithin{thm}{section}
\title{\em Irreducible Nonsurjective Endomorphisms of $F_n$ are Hyperbolic}
\author{Jean Pierre Mutanguha}
\address{Department of Mathematical Sciences \\ University of Arkansas \\ Fayetteville, AR \newline \indent  \it Web address: \tt {\url{https://mutanguha.com}}}
\email{\href{mailto:jpmutang@uark.edu}{jpmutang@uark.edu}}
\date{} 
\begin{document}

\begin{abstract} Previously, Reynolds showed that any irreducible nonsurjective endomorphism can be represented by an irreducible immersion on a finite graph. We give a new proof of this and also show a partial converse holds when the immersion has connected Whitehead graphs with no cut vertices. The next result is a characterization of finitely generated subgroups of the free group that are invariant under an irreducible nonsurjective endomorphism. Consequently, irreducible nonsurjective endomorphisms are fully irreducible. The characterization and Reynolds' theorem imply that the mapping torus of an irreducible nonsurjective endomorphism is word-hyperbolic.
\end{abstract}
\maketitle

\section{Introduction}

Bestvina-Handel defined irreducible automorphisms of free groups in \cite{BH92} as the base case of their proof of the Scott conjecture. Irreducible automorphisms are the dynamical free group analogues of pseudo-Anosov diffeomorphisms of surfaces and, as such, the study of their dynamics has been an active area of research since their introduction. For instance, a leaf of the attracting lamination of a pseudo-Anosov diffeomorphism cannot have compact support in an infinite cover of the surface, and analogously, Bestvina-Feighn-Handel constructed a lamination for fully irreducible automorphisms and showed that a leaf of the lamination cannot be supported by a finitely generated infinite index subgroup \cite[Proposition~2.4]{BFH97}. Another analogy: pseudo-Anosov mapping classes act with north-south dynamics on the Thurston compactification of Teichm\"uller space \cite{Thu88} and Levitt-Lustig proved that fully irreducible automorphisms act with north-south dynamics on the compactification of  Culler-Vogtmann outer space \cite{LL03}. We would like to extend these results to irreducible endomorphisms.

We begin by defining the (semi)action of an injective endomorphism on outer space and the first result is that irreducible nonsurjective endomorphisms act with sink dynamics on outer space. There is a correspondence between having a fixed point of the action and being able to represent the endomorphism with a graph immersion. Under this correspondence, we prove:

\begin{restate}{Theorem}{part2} If $\phi:F\to F$ is an irreducible nonsurjective endomorphism, then it can be represented by an irreducible immersion with connected Whitehead graphs. Moreover, the immersion is unique amongst irreducible immersions.
\end{restate}

All the terms in the theorem will be defined in Section \ref{defs}. This is an unpublished result of Patrick Reynolds \cite[Corollary~5.5]{Rey} but the proof given here is original. We also note that Reynolds' proof assumes the endomorphism is fully irreducible but, as we shall see shortly, this is equivalent to being irreducible. Not much is currently known about the action; for instance,

\begin{prob}Does the action of an irreducible nonsurjective endomorphism on outer~space always have bounded image? See Example~\ref{sapir2}.
\end{prob}

Using the sink dynamics on outer space, we prove a partial converse of Theorem~\ref{part2}:

\begin{restate}{Theorem}{part3} If $\phi:F \to F$ is represented by an irreducible immersion whose Whitehead graphs are connected and have no cut vertices, then $\phi$ is nonsurjective and fully irreducible.
\end{restate}

Example~\ref{countereg} below shows that this criterion is not necessary, which raises the question:

\begin{prob}Is there an algorithmic characterization for fully irreducible nonsurjective endomorphisms?
\end{prob}

The next proposition essentially determines which finitely generated subgroups are invariant under irreducible endomorphisms; roughly speaking, these are the iterated images of the endomorphism, up to finite index.

\begin{restate}{Proposition}{invSbgrp}Suppose $\phi : F \to F$ is injective and represented by an irreducible train track with connected Whitehead graphs. If $H \le F$ is a finitely generated subgroup such that $\phi(H) \le H$ and $H$ contains a $\phi$-expanding conjugacy class, then \[ [\phi^k(F) : \phi^k(F) \cap H] < \infty \quad \text{for some } k \ge 0.\]
\end{restate}

This follows from the technical Proposition~\ref{supSbgrp}. We then get a characterization for fully irreducible endomorphisms that applies to all injective endomorphisms.

\begin{restate}{Theorem}{propKap}Let $\phi:F \to F$ be an injective endomorphism. Then $\phi$ is fully irreducible if and only if $\phi:F \to F$ has no periodic cyclic free factor, it is represented by an irreducible train track with connected Whitehead graphs, and its image $\phi(F)$ is not contained in a proper free factor.
\end{restate}

Dowdall-Kapovich-Leininger had previously shown the equivalence between the irreducible and fully irreducible properties for atoroidal automorphisms  \cite[Corollary~B.4]{DKL15}.
Theorem~\ref{part2} and Theorem~\ref{propKap} give us this equivalence for nonsurjective endomorphisms.

\begin{restate}{Corollary}{myequiv} If $\phi:F \to F$ is irreducible and nonsurjective, then it is fully irreducible.
\end{restate}

Another consequence of Proposition~\ref{invSbgrp} is the hyperbolicity of the mapping torus of an irreducible immersion with connected Whitehead graphs -- this was our original motivation.

\begin{restate}{Theorem}{hypthm} If $\phi:F \to F$ is represented by an irreducible immersion with connected Whitehead graphs, then $F*_\phi$ is word-hyperbolic. In particular, if $\phi:F \to F$ is nonsurjective and irreducible, then $F*_\phi$ is word-hyperbolic.
\end{restate}

By our previous work \cite{JPM}, the proof boils down to showing that irreducible nonsurjective endomorphisms cannot have periodic laminations. As a quick application, we prove the hyperbolicity of the {\it Sapir group}:

\begin{eg}\label{sapir}Let $F = F(a,b)$ be the free group on two generators and $\phi:F \to F$ be the nonsurjective endomorphism given by $\phi(a) = ab$ and $\phi(b) = ba$. The obvious map on the standard rose $f: R_2 \to R_2$ will be an immersion that induces $\phi$ on the fundamental group. It is easy to verify that $f$ is an irreducible immersion whose Whitehead graph is connected and has no cut vertices. By Theorem~\ref{part3}, $\phi$ is fully irreducible, and by Theorem \ref{hypthm}, the Sapir group $F*_\phi = \langle a, b, t ~|~ t^{-1}at = ab, t^{-1}bt = ba \rangle$ is word-hyperbolic.

In contrast, $\psi:F \to F$ given by $(a,b)\mapsto (aba, bab)$ is not word-hyperbolic: $\psi(ab) = (ab)^3$ and so $BS(1,3) \cong \langle ab, t \rangle \le F*_\psi$. As $H_1(F*_\psi) = \mathbb Z^2$ has rank $>1$, there are infinitely many isomorphisms $F*_\psi \cong F_n*_{\psi_n}$ \cite{BNS87}; all such endomorphisms $\psi_n:F_n \to F_n$ are nonsurjective, because $BS(1,3) \le F*_\psi$, and reducible by Theorem~\ref{hypthm}.
\end{eg}

In future work, we use Proposition~\ref{invSbgrp} to show that the irreducibility of an endomorphism is a group-invariant of its mapping torus, i.e., suppose $\phi:F_n \to F_n$ and $\psi:F_m \to F_m$ are endomorphisms whose images are not contained in proper free factors and $F_n*_\phi \cong F_m *_\psi$, then $\phi$ is irreducible if and only if $\psi$ is irreducible~\cite{JPM2}. This answers Question 1.4 posed by Dowdall-Kapovich-Leininger in \cite{DKL17}; see also \cite[Theorem~C]{DKL15}.

{~}

\noindent \textbf{Outline:} Section~\ref{defs} contains the standard definitions and preliminary results that will be used throughout the paper. We define outer space and the action of injective endomorphisms on it in Section~\ref{cvf}. Section~\ref{immersions} contains the proof of Theorem~\ref{part2} and a partial converse Theorem~\ref{part3}. In Section~\ref{iwip}, we prove the technical result classifying subgroups that support a leaf of a lamination, Proposition~\ref{supSbgrp}. While the two previous sections are independent, we combine their main results in Section~\ref{hyperbolic} to prove Theorem~\ref{hypthm}.

{~}

\noindent \textbf{Acknowledgments:} I would like to thank Ilya Kapovich for getting me started on the proof of Theorem \ref{part2}. I also thank Derrick Wigglesworth and my advisor Matt Clay for the countless conversations on this material. The clarity of my exposition benefited from the referee's comments.

\section{Definitions and Preliminary Results}\label{defs}

In this paper, $F$ is a finitely generated free group on at least two generators.

\begin{defn} An endomorphism $\phi: F \to F$ is {\bf reducible} if there exists a free factorization $A_1 * \cdots * A_k * B $ of $F$, where $B$ is nontrivial if $k = 1$, and a sequence of elements, $(g_i)_{i=1}^k$,  in $F$ such that $\phi(A_i) \le g_i A_{i+1} g_i^{-1}$ where the indices are considered$\mod k$. An endomorphism $\phi$ is {\bf irreducible} if it is not reducible and it is {\bf fully irreducible} if all its iterates are irreducible; equivalently, $\phi$ is fully irreducible if $\phi$ has no invariant proper free factor, i.e., there does not exist a proper free factor $A \le F$, an element $g \in F$, and an integer $n \ge 1$ such that $\phi^n(A) \le gAg^{-1}$.
\end{defn}

An application of Stallings folds implies the endomorphisms studied in this paper will be injective even though it may not be explicitly stated again.
\begin{lem}\label{inj} If $\phi:F \to F$ is irreducible, then it is injective.\end{lem}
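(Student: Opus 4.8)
The plan is to prove the contrapositive: if $\phi$ is not injective, then $\phi$ is reducible. I would fix a free basis of $F$, realize it as a rose $R$ with $\pi_1(R) = F$, and choose a graph morphism $f \colon R \to R$ (sending edges to reduced edge-paths) inducing $\phi$ on $\pi_1$. The main tool is the Stallings factorization of $f$ as $f = \iota \circ p$, where $p \colon R = \Gamma_0 \to \Gamma_1 \to \cdots \to \Gamma_m$ is a composition of folds and $\iota \colon \Gamma_m \to R$ is an immersion. Since an immersion is $\pi_1$-injective, $\iota_*$ is injective; as $\phi = \iota_* \circ p_*$ fails to be injective, $p_*$ must fail to be injective.

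Folds come in two kinds: those identifying two edges with distinct terminal vertices, which are homotopy equivalences (rank-preserving, hence $\pi_1$-isomorphisms), and those identifying two edges sharing both endpoints, which drop the rank of $\pi_1$ by one. Since $p_*$ is not injective, at least one rank-dropping fold occurs, and I would take the \emph{first} such fold, say $\Gamma_i \to \Gamma_{i+1}$, identifying edges $\bar e_1, \bar e_2$ with common endpoints $u, v$. The kernel of $(\Gamma_i \to \Gamma_{i+1})_*$ then contains the loop $w = \bar e_1 \bar e_2^{-1}$.

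The crucial observation is that $w$ is primitive in $\pi_1(\Gamma_i)$: choosing a spanning tree of $\Gamma_i$ containing $\bar e_1$ but not $\bar e_2$ exhibits (a conjugate of) $w$ as a basis element. Because the first $i$ folds are all rank-preserving, the map $\pi_1(\Gamma_0) \to \pi_1(\Gamma_i)$ is an isomorphism, and isomorphisms of free groups preserve primitivity; pulling $w$ back therefore gives a primitive element $\tilde w \in F = \pi_1(R)$ with $p_*(\tilde w) = 1$, hence $\phi(\tilde w) = \iota_*(1) = 1$. Extending $\tilde w$ to a free basis yields a free factorization $F = \langle \tilde w \rangle * B$ with $B$ nontrivial (as $F$ has rank at least $2$), and since $\phi(\langle \tilde w\rangle) = 1 \le \langle \tilde w\rangle$, the endomorphism $\phi$ is reducible with $k = 1$.

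The step I expect to require the most care is verifying that the killed loop at the first rank-dropping fold is genuinely primitive in $\pi_1(\Gamma_i)$ and that this primitivity survives pullback through the earlier (rank-preserving) folds — equivalently, that non-injectivity of $\phi$ forces a primitive element into $\ker\phi$. This is what rules out the a priori worry that $\ker\phi$ could lie inside the commutator subgroup; indeed, non-injectivity makes the rank of $\phi(F)$ strictly smaller than the rank of $F$, so $\phi$ cannot be injective on $H_1(F)$, confirming that $\ker\phi \not\le [F,F]$ and that a primitive kernel element should exist.
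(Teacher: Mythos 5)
Your argument is correct and is essentially an expanded version of the paper's own proof, which likewise factors a topological representative through Stallings folds and observes that a non-injective fold has kernel the normal closure of a proper free factor, so that $\ker\phi$ contains an invariant proper free factor (your $F=\langle \tilde w\rangle * B$ with $\phi(\langle\tilde w\rangle)=1\le\langle\tilde w\rangle$, i.e.\ $k=1$, $g_1=1$). The one spot needing a word of care is the sub-case $u=v$, where both folded edges are loops and neither lies in any spanning tree; there $w=\bar e_1\bar e_2^{-1}$ is the product of one basis element with the inverse of another, which is still primitive, so the conclusion is unaffected (and note that your closing homological remark only produces an element mapped \emph{into} $[F,F]$, not a kernel element, so it is best omitted --- the primitivity argument is what actually does the work).
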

\begin{proof} Suppose $\phi$ is not injective. Stallings \cite{St83} showed that any endomorphism of $F$ factors through folds whose nontrivial kernels corresponding to proper subgraphs, hence are normal closures of proper free factors. Thus $\ker \phi$ contains an invariant proper free factor, which contradicts the irreducibility assumption.
\end{proof}

\begin{defn} Fix an isomorphism $F \cong \pi_1(\Gamma)$ for some connected finite core graph $\Gamma$, i.e., a finite 1-dimensional CW-complex with no univalent vertices. For any subgroup $H \le F$, the {\bf Stallings subgroup graph} $S(H)$ is the core of the cover of $\Gamma$ corresponding to $H$, i.e., it is the smallest subgraph of the cover containing all immersed loops of the cover. When $H$ is nontrivial and finitely generated, $S(H)$ is a finite core graph with rank $\ge 1$. A Stallings subgroup graph comes with an immersion $v: S(H) \to \Gamma$, which is a restriction of the covering map. We shall also assume that $S(H)$ is subdivided so that $v$ is {\bf simplicial}: $v$ maps edges to edges. The immersion $S(H) \to \Gamma$ uniquely determines $[H]$, the conjugacy class of $H$:  if there is a subgroup $H' \le F$ with immersion $v':S(H') \to \Gamma$ and homeomorphism $h:S(H) \to S(H')$ such that $v = v'\circ h$, then $[H] = [H']$. 
\end{defn}

The last statement makes Stallings subgroup graphs particularly useful for studying a nonsurjective injective endomorphism $\phi:F \to F$. For $i \ge 1$, let $S_i = S(\phi^i(F))$. When $\phi$ is an automorphism, the maps $v_i: S_i \to \Gamma$ are all graph isomorphisms. Conversely, we get:
\begin{lem}\label{nonsurjSt} If $\phi:F \to F$ is injective and nonsurjective, then the number of edges in $S_i$ is unbounded as $i \to \infty$.
\end{lem}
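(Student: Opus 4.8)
The plan is to prove the contrapositive: assuming the number of edges of $S_i$ stays bounded by some constant $M$ for all $i$, I will derive a contradiction. The first thing to record is the basic structure of the chain $\{\phi^i(F)\}$. Since $\phi$ is injective, each $\phi^i$ restricts to an isomorphism $F \to \phi^i(F)$, so every $\phi^i(F)$ is a finitely generated subgroup of rank $n = \operatorname{rank}(F)$. Since $\phi$ is nonsurjective we have $\phi(F) \subsetneq F$, and applying the injection $\phi^i$ to this proper inclusion gives $\phi^{i+1}(F) \subsetneq \phi^i(F)$ for every $i$. Thus $F = \phi^0(F) \supsetneq \phi(F) \supsetneq \phi^2(F) \supsetneq \cdots$ is a strictly descending chain of rank-$n$ subgroups.

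Next I would convert the edge bound into a finiteness statement about conjugacy classes. Each $S_i$ comes equipped with a simplicial immersion $v_i \colon S_i \to \Gamma$, and by the uniqueness built into the definition of the Stallings subgroup graph the pair $(S_i, v_i)$ determines the conjugacy class $[\phi^i(F)]$. If every $S_i$ has at most $M$ edges, then since $S_i$ is connected with no isolated vertices it has at most $2M$ vertices, so there are only finitely many possibilities for the underlying graph up to isomorphism; and for each such graph there are only finitely many simplicial maps into the fixed finite graph $\Gamma$ (each edge must go to one of finitely many oriented edges of $\Gamma$). Hence there are only finitely many immersions $(S_i,v_i)$ up to the equivalence fixing the conjugacy class, so $\{[\phi^i(F)] : i \ge 1\}$ is a finite set. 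By the pigeonhole principle there are indices $a < b$ with $[\phi^a(F)] = [\phi^b(F)]$, i.e. $\phi^b(F) = g\,\phi^a(F)\,g^{-1}$ for some $g \in F$.

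Finally I would combine this with the chain structure. Writing $K = \phi^a(F)$, the coincidence of conjugacy classes reads $gKg^{-1} = \phi^b(F)$, while the strict descent gives $\phi^b(F) \subsetneq K$; hence $gKg^{-1} \subsetneq K$ is a proper inclusion. This contradicts the fact that a finitely generated subgroup of a free group can never be conjugate to a proper subgroup of itself, i.e. $gKg^{-1} \le K$ forces $gKg^{-1} = K$. I expect this last fact to be the main obstacle: it is precisely where finiteness of Stallings graphs must be exploited, and one can establish it either by checking that the self-map of the finite core graph induced by the inclusion is forced to be an isomorphism, or (after first arguing the index $[K : gKg^{-1}]$ is finite) by the Nielsen--Schreier rank formula, since $\operatorname{rank}(gKg^{-1}) = \operatorname{rank}(K) = n \ge 2$. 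With this fact in hand the contradiction is immediate, so the boundedness assumption fails and the edge count of $S_i$ is unbounded as $i \to \infty$.
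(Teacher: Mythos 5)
Your proof is correct and follows essentially the same route as the paper's: bound the number of possible pairs $(S_i, v_i)$, apply pigeonhole to get $[\phi^a(F)] = [\phi^b(F)]$ with $a<b$, and contradict the fact that a finitely generated subgroup of a free group cannot be conjugate to a proper subgroup of itself. The only (cosmetic) difference is the endgame: you invoke the strict descent $\phi^b(F)\subsetneq\phi^a(F)$ to get the contradiction immediately, while the paper first concludes $\phi^b(F)=\phi^a(F)$ and then deduces that $\phi$ would be an automorphism.
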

\begin{proof}
Suppose there is a homeomorphism $h: S_i \to S_j$ for some $i < j$ such that $v_i = v_j \circ h$. Then, by the last statement of the definition, $[\phi^i(F)] = [\phi^j(F)]$. This means there is some element $g \in F$ such that $g\phi^i(F) g^{-1} = \phi^j(F) \le \phi^i(F) $. But in free groups (subgroup separable), a finitely generated subgroup cannot be conjugate to a proper subgroup. Hence $\phi^j(F) = \phi^i(F)$. But $\left.\phi^{j-i}\right|_{\phi^i(F)}$ is conjugate to $\phi^{j-i}$ as $\phi$ is injective. In particular, $\phi^{j-i}$ is an automorphism and so is $\phi$ -- a contradiction. Therefore, the maps $v_i$ are all distinct and the number of edges in $S_i$ is unbounded as $ i \to \infty$.
\end{proof}
We shall use this lemma in Section~\ref{immersions} to show that, for an irreducible nonsurjective endomorphism, the sequence of maps $S_i \to S_i$ that induce $\phi$ on $\pi_1(S_i) \cong F$ converges to an immersion on some graph $\Gamma'$ that induces $\phi$ on $\pi_1(\Gamma') \cong F$.

\begin{defn} Let $\Gamma$ be a connected finite core graph. A map $f: \Gamma \to \Gamma$ is a {\bf train track} if it maps vertices to vertices and all of its iterates are locally injective in interior of edges and at bivalent vertices. A train track is {\bf irreducible} if for any pair of edges $e_i$, $e_j$ in $\Gamma$, $e_i$ is in the image of some $f$-iterate of $e_j$. 
Given a train track $f: \Gamma \to \Gamma$, we fix an ordering of the edges of $\Gamma$ and construct the {\bf transition matrix} $A(f)$ as follows: it is a nonnegative square matrix whose size is given by the number of edges in $\Gamma$; the $(i, j)$-th entry of $A(f)$ is the number of times the edge $e_i$ appears in the image of $e_j$. An irreducible train track is  {\bf expanding} if it is not a homeomorphism; equivalently, the transition matrix is {\it irreducible} and has a real eigenvalue $\lambda_f > 1$ -- this is the {\it Perron-Frobenius eigenvalue}.

Let $f:\Gamma \to \Gamma$ be a train track map and $v$ be a vertex of $\Gamma$. The {\bf Whitehead graph} at $v$ is a simple graph whose vertices are the half-edges of $\Gamma$ attached to $v$, denoted by $T_v(\Gamma)$. A pair of elements of $T_v(\Gamma)$ is called a {\bf turn} at $v$ and it is {\bf nondegenerate} if the pair consists of distinct elements. The train track $f$ induces a map on $T_v(\Gamma)$. A nondegenerate turn is {\bf $f$-legal} if it remains nondegenerate under iteration of $f$. A turn at $v$ is an edge of the Whitehead graph if it appears in the image of an $f$-iterate of some edge of $\Gamma$. Note that the edges in the Whitehead graphs are $f$-legal.

An irreducible train track $f: \Gamma \to \Gamma$ is {\bf weakly clean} if the Whitehead graph at each vertex is connected. A weakly clean map is {\bf clean} if there is an iterate such that every edge surjects onto the whole graph; equivalently, the transition matrix is {\it primitive}. It follows from the definition that if a map $f$ is clean then so are all its iterates $f^i~(i \ge 1)$.
\end{defn}

The following proposition due to Dowdall-Kapovich-Leininger \cite[Proposition~B.2]{DKL15} allows us to use clean and weakly clean interchangeably. We give a proof that does not assume  homotopy-equivalence.

\begin{prop}\label{clean} Suppose $f:\Gamma \to \Gamma$ is a train track with an irreducible but not primitive transition matrix. Then $f$ has a vertex with a disconnected Whitehead graph. In particular, if $f$ is weakly clean, then $f$ is clean.\end{prop}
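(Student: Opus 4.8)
The plan is to exploit the cyclic block structure that Perron--Frobenius theory attaches to an irreducible but imprimitive nonnegative integer matrix, and to show that this structure is visible at a single vertex of $\Gamma$ as a disconnected Whitehead graph. Since $A = A(f)$ is irreducible but not primitive, it has imprimitivity index $h > 1$, and the edges of $\Gamma$ partition into nonempty \emph{cyclic classes} $E_0, \dots, E_{h-1}$ with the property that $f$ carries each edge of $E_r$ to an edge-path using only edges of $E_{r+1}$, indices taken $\bmod\, h$. I would begin by recording this as a \emph{coloring} of the (unoriented) edges of $\Gamma$ by $\mathbb{Z}/h$, so that $f$ shifts every color by $+1$; iterating, $f^k$ maps each edge of $E_r$ to a path all of whose edges lie in $E_{r+k}$, i.e. an edge-path that is monochromatic of color $r+k$.

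The crux is the observation that edges of the Whitehead graphs respect this coloring. Each half-edge inherits the color of its underlying edge. An edge of $T_v(\Gamma)$ is, by definition, a turn $\{\bar g, h\}$ occurring at $v$ inside some image $f^k(e)$, where $g$ and $h$ are the two consecutive edges of the path $f^k(e)$ meeting at $v$. By the previous paragraph both $g$ and $h$ have the same color (namely that of $e$ shifted by $k$), so the half-edges $\bar g$ and $h$ are the same color. Thus every edge of every Whitehead graph joins two half-edges of equal color, and half-edges of different colors necessarily lie in different connected components of $T_v(\Gamma)$.

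It then remains to produce a vertex $v$ at which at least two colors occur among the incident half-edges. I would argue by contradiction: if every vertex were monochromatic, then since the two endpoints of an edge of color $r$ would both carry color $r$, connectivity of $\Gamma$ would force a single color on all of $\Gamma$, contradicting $h > 1$ and the nonemptiness of the classes. At such a polychromatic vertex the previous paragraph shows $T_v(\Gamma)$ is disconnected, proving the main statement. The ``in particular'' clause is then the contrapositive: if $f$ is weakly clean, all Whitehead graphs are connected, so $A(f)$ cannot be irreducible and imprimitive; being irreducible it must be primitive, i.e. $f$ is clean.

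I expect the main obstacle to be bookkeeping rather than ideas: one must invoke the Perron--Frobenius cyclic-class decomposition with the correct alignment (that $f$ shifts classes by exactly one) and be careful that the ``turn'' and ``half-edge'' colorings behave well under reversal, so that $\bar g$ genuinely has the same color as $g$. Because the transition matrix counts unoriented edges, the reversed half-edge $\bar g$ lies on the same edge as $g$ and hence shares its color, which is precisely what makes the argument go through; I would state the coloring on unoriented edges explicitly to avoid any orientation confusion.
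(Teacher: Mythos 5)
Your proof is correct and is essentially the paper's own argument: both invoke the Perron--Frobenius cyclic-class (block permutation) decomposition of the edge set into $d\ge 2$ classes cyclically shifted by $f$, note that Whitehead-graph edges only join half-edges within a single class, and use connectivity of $\Gamma$ to find a vertex meeting two classes. You merely spell out more explicitly the step the paper leaves implicit, namely why such a vertex has a disconnected Whitehead graph.
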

\begin{proof}As $A(f)$ is irreducible but not primitive, it is permutation-similar to a transitive block permutation matrix \cite[Theorem 1.8.3]{BR97}. This block permutation form gives a partition of $E(\Gamma)$ into $d \ge 2$ proper subgraphs $\Gamma_0, \ldots, \Gamma_{d-1}$ such that $f(\Gamma_i) \subset \Gamma_{i+1}$ where indices are considered$\mod d$. Thus, any vertex adjacent to two or more of these subgraphs has a disconnected Whitehead graph. Such a vertex exists since $\Gamma$ is connected.\end{proof}

\begin{defn}Two homomorphisms $\phi_1, \phi_2:A \to B$ are {\it equivalent} if there is an inner automorphism of $B$, $i_g$, such that $\phi_1 = i_g  \circ \phi_2$. An {\it outermorphism} is an equivalence class in $\operatorname{Hom}(A,B)$, denoted by $[\phi]$, and $\Out(F)$ is the group of outer automorphisms of $F$. A graph map $f:\Gamma \to \Gamma$ is a {\bf (topological) representative} for an injective (outer) endomorphism $\phi: F \to F$ if: $\Gamma$ is a connected core graph; the map $f$ maps vertices to vertices and is locally injective in interior of edges and at bivalent vertices; and there is an isomorphism $\alpha: F \to \pi_1(\Gamma)$, known as a {\bf marking}, such that $[f_*\alpha] = [\alpha\phi]$.
\end{defn}

Bestvina-Handel defined train tracks in \cite{BH92} and one of the main results was the algorithmic construction of train track representatives for irreducible endomorphisms. 
We note that their result was stated for irreducible automorphisms but the proof itself never used nor needed the fact that the endomorphisms were surjective. See also Dicks-Ventura \cite{DV96}.

\begin{thm}[{\cite[Theorem~1.7]{BH92}}]\label{tt} If $\phi: F \to F$ is an irreducible endomorphism, then $\phi$ can be represented by an irreducible train track map. The irreducible train track is expanding if and only if $\phi$ has infinite-order.\end{thm}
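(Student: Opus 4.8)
The plan is to reproduce the Bestvina--Handel train track algorithm and to observe, exactly as the authors flag, that each step uses only $\pi_1$-injectivity of the representative and never surjectivity; since an irreducible $\phi$ is injective by Lemma~\ref{inj}, the algorithm applies verbatim. The quantity to be controlled throughout is the Perron--Frobenius eigenvalue $\lambda_f$ of the transition matrix of an irreducible topological representative $f:\Gamma\to\Gamma$, which measures the exponential growth rate of edge-lengths under iteration. First I would produce an irreducible topological representative: starting from the obvious map on a rose, if the transition matrix is reducible then its block structure exhibits an $f$-invariant proper subgraph, and after collapsing maximal invariant forests and splitting off the noncontractible invariant pieces this yields an $f$-invariant proper free factor system, contradicting irreducibility of $\phi$. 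Hence some representative has an irreducible transition matrix, and this is precisely the place where irreducibility (not full irreducibility) is used.

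Next I would minimize complexity: among all irreducible topological representatives of $\phi$, first minimize $\lambda_f$ and then, among the minimizers, minimize the number of edges of $\Gamma$. The infimum is attained because for a bounded edge count there are only finitely many candidate transition matrices, so the eigenvalues range over a well-ordered set. The heart of the argument is to show this minimizer is already a train track. If it is not, some iterate fails to be locally injective, so there is an illegal turn: edges $e_1,e_2$ at a common vertex with $f(e_1)$ and $f(e_2)$ beginning with the same edge. I would fold the common initial segments and retighten. The Bestvina--Handel analysis shows that this fold, together with the auxiliary moves (subdivision, collapsing an invariant forest, and removing valence-one and valence-two vertices), either strictly decreases $\lambda_f$ or holds $\lambda_f$ fixed while strictly decreasing the edge count, all the while preserving both irreducibility of the transition matrix and the property of representing $\phi$. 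Either outcome contradicts minimality, so no illegal turn survives and $f$ is an irreducible train track.

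For the expanding dichotomy, I would argue that $f$ is nonexpanding iff $\lambda_f=1$ iff its irreducible transition matrix is a permutation matrix iff $f$ permutes the edges of $\Gamma$, in which case $f$ is a finite-order homeomorphism and $\phi$ has finite order. Conversely, if $\phi$ has finite order then it is an automorphism, and since $\lambda_{f^n}=\lambda_f^{\,n}$ while $f^n$ represents a finite-order map, one forces $\lambda_f=1$, so $f$ is nonexpanding. In particular a nonsurjective $\phi$ is never finite order and is therefore automatically expanding.

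I expect the main obstacle to be the folding step in the nonsurjective setting. In the automorphism case one leans freely on $f$ being a homotopy equivalence, whereas here one has only $\pi_1$-injectivity, so the real work is auditing each move to confirm that injectivity is preserved, that the complexity genuinely drops, and that the invariant-subgraph bookkeeping never reintroduces reducibility. This is exactly the content of the authors' remark that Bestvina--Handel's proof never invoked surjectivity, so the task is verification rather than the introduction of new ideas.
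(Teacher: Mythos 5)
Your proposal is correct and is essentially the paper's approach: the paper gives no independent proof but cites Bestvina--Handel \cite[Theorem~1.7]{BH92} and remarks that their algorithm only uses $\pi_1$-injectivity (guaranteed by Lemma~\ref{inj}), never surjectivity, which is exactly the audit you describe. The one sketch-level slip is the claim that a bounded edge count gives finitely many candidate transition matrices --- you need to first bound the Perron--Frobenius eigenvalue above by that of the initial representative, after which only finitely many irreducible nonnegative integer matrices of bounded size remain, so the infimum of $\lambda_f$ is attained.
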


The following argument is now a standard technique in the field and has its roots in Bestvina-Handel's paper \cite[Proposition~4.5]{BH92}.

\begin{cor}\label{clean2} If $\phi:F \to F$ is irreducible and has infinite-order, then $\phi$ has a clean representative. Moreover, any irreducible train track representative of $\phi$ will be clean.\end{cor}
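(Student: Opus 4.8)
The plan is to prove the stronger statement that \emph{every} irreducible train track representative of $\phi$ is weakly clean; cleanliness then follows immediately from Proposition~\ref{clean}, and applying this to the representative furnished by Theorem~\ref{tt} yields both the existence claim and the ``moreover'' simultaneously. Concretely, since $\phi$ is irreducible and of infinite order, Theorem~\ref{tt} produces an expanding irreducible train track representative, so such representatives exist; and given any irreducible train track representative $f : \Gamma \to \Gamma$, once all of its Whitehead graphs are known to be connected (i.e.\ $f$ is weakly clean), Proposition~\ref{clean} guarantees that $f$ is clean, its transition matrix---irreducible by the definition of an irreducible train track---being primitive.

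To establish weak cleanness, let $f : \Gamma \to \Gamma$ be an irreducible train track representative of $\phi$ and suppose, for contradiction, that the Whitehead graph $T_v(\Gamma)$ at some vertex $v$ is disconnected. Then the directions at $v$ partition as $D_1 \sqcup D_2$ with both parts nonempty and no edge of $T_v(\Gamma)$---equivalently, no turn taken by any $f$-iterate of any edge---running between $D_1$ and $D_2$. Following the folding technique of Bestvina--Handel, I would use this partition to pull $v$ apart: replace $v$ by two vertices $v_1, v_2$ carrying $D_1$ and $D_2$ respectively, joined by a new edge $\epsilon$, and extend $f$ across $\epsilon$. The hypothesis that no iterated turn crosses the partition is exactly what makes this extension a train track, and it forces $\epsilon$ together with its iterated images to be swept into a proper $f$-invariant subgraph whose components carry a proper free factor system. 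Tracking how $f$ permutes these components would then exhibit a proper free factorization $A_1 * \cdots * A_k * B$ with $\phi(A_i) \le g_i A_{i+1} g_i^{-1}$ cyclically, contradicting the irreducibility of $\phi$; hence no Whitehead graph of $f$ is disconnected.

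The hard part is the middle step: converting the purely local data of a single disconnected Whitehead graph into a genuine global reduction of $\phi$ in the precise sense of the definition, namely a proper free factor system that is cyclically permuted up to conjugacy. The delicate points are controlling whether the pulled-apart graph remains connected (yielding a wedge decomposition) or instead drops rank, checking that the new edge is genuinely absorbed into an invariant \emph{subgraph} rather than into an invariant forest that merely reverses the blow-up, and verifying that the resulting free factors are proper. This case analysis is exactly where Bestvina--Handel's Proposition~4.5 does its work, and I would model the bookkeeping on that argument.
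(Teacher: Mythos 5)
Your proposal takes essentially the same route as the paper: the paper's own (sketch) proof also obtains an expanding irreducible train track from Theorem~\ref{tt}, argues that a disconnected Whitehead graph would allow one to blow up the vertex and its preimages to produce a reduction of $\phi$ (deferring the bookkeeping to Bestvina--Handel's Proposition~4.5 via Kapovich's write-up, just as you defer to the same source), and then passes from weakly clean to clean via Proposition~\ref{clean}. Your argument is correct and at the same level of detail as the paper's, so no further comparison is needed.
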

\begin{proof}[Sketch proof] By Theorem \ref{tt}, the endomorphism $\phi$ has an expanding irreducible train track representative. If the Whitehead graph of some vertex were disconnected, then blowing-up the vertex and the appropriate preimages to separate the components of the Whitehead graph would give a reduction of $\phi$. So all Whitehead graphs are connected and the representative is (weakly) clean. For more details, see \cite[Proposition 4.1]{Kap14}\end{proof}

\begin{rmk}We shall take the moment to discuss a bit of history. As we have already noted, Bestvina-Handel constructed train track representatives for irreducible automorphisms but the construction only used Stallings folds and Perron-Frobenius theory; thus it applies for irreducible endomorphisms. 

In the subsequent {\it laminations paper} \cite{BFH97}, Bestvina-Feighn-Handel show that irreducible automorphisms have weakly clean representatives. The argument again applies to irreducible endomorphisms as shown in the previous corollary and Proposition~\ref{clean} allows us to strengthen the conclusion to get clean representatives. Proposition~\ref{clean} is important as it renders the erratum \cite{BFH97e} to the laminations paper unnecessary: full irreducibility is not needed for any of the results in that paper, only the existence of a clean representative.

In fact, besides the results involving {\it repelling laminations/trees}, the rest also apply to nonsurjective endomorphisms with clean representatives; most importantly for our paper, Lemma~\ref{sink} holds in this greater generality even though it was originally stated only for (fully) irreducible automorphisms.
\end{rmk}

For any irreducible train track map $f:\Gamma \to \Gamma$, there is the associated Perron-Frobenius eigenvalue of the transition matrix, $\lambda_f \ge 1$, and a metric on the graph $\Gamma$ given by the Perron-Frobenius left eigenvector. With this metric, $f$ is homotopic rel. vertices to a (unique) map whose restrictions to edges are local $\lambda_f$-homotheties. For the rest of the paper, we shall assume an irreducible train track map has the latter linear property. The following lemmas combined with the train track linear structure allow us to carefully study the dynamics of $f$. The first lemma is known as the {\bf Bounded Cancellation Lemma}.

{~}

{\noindent \bf Notation.} We denote by $[p]$ (resp. $[\rho]$) the immersed path (resp. loop) homotopic rel. endpoints to a path $p$ (resp. freely homotopic to a loop $\rho$).
We denote the length of an immersed path $p$ in $\Gamma$ by $l(p)$. When no metric on $\Gamma$ is specified, the length is assumed to be the combinatorial length.

\begin{lem}[{\cite[Lemma II.2.4]{DV96}}]Suppose $f: \Gamma \to \Gamma$ is a topological representative for an injective endomorphism $\phi:F \to F$. Then there exists an constant $C = C(f)$ such that for any immersed path that can be written as a concatenation $a \cdot b$ in $\Gamma$, there exists path decompositions \( [f(a)] =  x \cdot u, [f(b)] =  \bar u \cdot y, [f(a \cdot b)] = x \cdot y,\) where $l(u) < C$.
\end{lem}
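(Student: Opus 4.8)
The plan is to pass to universal covers, where the cancellation statement becomes a \emph{bounded backtracking} property for a lift of $f$, and then to extract that property from the $\pi_1$-injectivity of the representative. Let $\pi:\tilde\Gamma\to\Gamma$ be the universal cover, a simplicial tree with the path metric making $\pi$ a local isometry and carrying the deck action of $F\cong\pi_1(\Gamma)$. Lift $f$ to a $\phi$-equivariant map $\tilde f:\tilde\Gamma\to\tilde\Gamma$, so that $\tilde f(g\cdot\tilde x)=\phi(g)\cdot\tilde f(\tilde x)$ for all $g\in F$; writing $L=\max_e l(f(e))$ for the maximal length of an edge image, $\tilde f$ is $L$-Lipschitz. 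A concatenation $a\cdot b$ as in the statement lifts to a geodesic $[\tilde x,\tilde y]\subset\tilde\Gamma$ through an intermediate vertex $\tilde z$, with $a$ lifting to $[\tilde x,\tilde z]$ and $b$ to $[\tilde z,\tilde y]$. Then $[f(a)]$, $[f(b)]$, $[f(a\cdot b)]$ are the tree geodesics joining $P=\tilde f(\tilde x)$, $Q=\tilde f(\tilde z)$, $R=\tilde f(\tilde y)$, and if $m$ is the median of $P,Q,R$ the tripod decompositions give precisely $x=[P,m]$, $u=[m,Q]$, $\bar u=[Q,m]$, $y=[m,R]$, with $[f(a\cdot b)]=[P,R]=x\cdot y$ and $l(u)=d(Q,[P,R])$. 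Hence it suffices to bound $d\bigl(\tilde f(\tilde z),[\tilde f(\tilde x),\tilde f(\tilde y)]\bigr)$ uniformly, i.e.\ to show $\tilde f$ has bounded backtracking.

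To get bounded backtracking I would prove that $\tilde f$ is a quasi-isometric embedding: then $\tilde f$ carries each geodesic to a quasigeodesic, and since $\tilde\Gamma$ is a tree (in particular $0$-hyperbolic) the Morse lemma places that quasigeodesic within uniform Hausdorff distance $C_0$ of the geodesic joining its endpoints, which is exactly the desired bound, giving $l(u)=d(Q,[P,R])\le C_0<C:=C_0+1$. The upper quasi-isometry bound is the Lipschitz estimate above. The lower bound is where injectivity of $\phi$ enters, and is the main obstacle. Fixing a basepoint vertex $\tilde v_0$ and setting $w_0=\tilde f(\tilde v_0)$, equivariance gives $\tilde f(g\cdot\tilde v_0)=\phi(g)\cdot w_0$, so the orbit map $g\mapsto\tilde f(g\cdot\tilde v_0)$ factors as $\phi$ followed by the orbit map $h\mapsto h\cdot w_0$ of the geometric $F$-action on $\tilde\Gamma$. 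The latter is a quasi-isometry, and $\phi:F\to\phi(F)\le F$ is a quasi-isometric embedding because the finitely generated subgroup $\phi(F)$ is quasiconvex in the free group $F$ (free groups are locally quasiconvex). As the orbit $F\cdot\tilde v_0$ is coarsely dense and $\tilde f$ is Lipschitz, $\tilde f$ is itself a quasi-isometric embedding.

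The step deserving the most care is precisely this lower bound. One cannot avoid using something special to free groups: an elementary pigeonhole argument (were the lower bound to fail one would produce infinitely many distinct $w\in F$ with $|w|_S\to\infty$ but $|\phi(w)|_S$ bounded, impossible since $\phi$ is injective and only finitely many elements have bounded word length) only yields that $\phi$ is proper, whereas a \emph{linear} lower bound is needed for the Morse lemma, and this is exactly the undistortedness of finitely generated subgroups of free groups. With the quasi-isometric embedding in hand, the remaining translation from the bounded-backtracking constant to the cancellation constant $C$ is the routine median computation carried out in the first paragraph, and the decompositions $[f(a)]=x\cdot u$, $[f(b)]=\bar u\cdot y$, $[f(a\cdot b)]=x\cdot y$ with $l(u)<C$ follow by projecting the tree geodesics back down through $\pi$.
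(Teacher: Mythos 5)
Your argument is correct, and it is worth noting that the paper itself offers no proof of this lemma but simply cites Dicks--Ventura, so the relevant comparison is with the standard proof in the literature. That proof (in \cite{DV96}, and in Bestvina--Feighn--Handel's Lemma~3.1) is combinatorial: one factors $f$ (or its lift) through a finite sequence of Stallings folds followed by an immersion, observes that an immersion cancels nothing and that each single fold cancels a uniformly bounded amount, and sums the contributions; this yields an explicit constant $C$ in terms of the number of folds and the edge lengths. Your route is instead the geometric one going back to Cooper: reformulate bounded cancellation as bounded backtracking of the equivariant lift $\tilde f$ of the tree $\tilde\Gamma$, reduce that to $\tilde f$ being a quasi-isometric embedding via the Morse lemma in a $0$-hyperbolic space, and obtain the lower quasi-isometry bound from the undistortedness of the finitely generated subgroup $\phi(F)$ in $F$ (local quasiconvexity of free groups), with the upper bound being the obvious Lipschitz estimate. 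Your tripod/median computation correctly translates the backtracking bound $d(\tilde f(\tilde z),[\tilde f(\tilde x),\tilde f(\tilde y)])\le C_0$ into the stated decompositions with $l(u)<C$, and you correctly isolate where injectivity is indispensable and why mere properness of $\phi$ would not suffice. The trade-off is that your constant is inexplicit (it depends on the quasiconvexity constant of $\phi(F)$ and the Morse constant), whereas the fold argument is elementary, effective, and avoids invoking quasiconvexity; on the other hand your argument generalizes immediately to any $\pi_1$-injective map between graphs with quasiconvex image and makes the conceptual content (non-distortion of the image) transparent.
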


\begin{lem}\label{critLem}Let $f:\Gamma \to \Gamma$ be an expanding irreducible train track and $C = C(f)$ the cancellation constant. If $b$ is $f$-legal path and $l(b) > \frac{2C}{\lambda_f - 1}$, then there is a nontrivial subpath $s$ of $b$ such that $f^k(s)$ is a subpath of $[f^k(a \cdot b \cdot c)]$ for any $k \ge 1$ and immersed path $a\cdot b \cdot c$.
\end{lem}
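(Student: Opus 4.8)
The plan is to track how much of the iterated image $f^k(b)$ survives the cancellation coming from the two junctions of $a \cdot b \cdot c$, show that this surviving core loses only a controlled amount from each end at every step, and then pull the core back through the homothety $f^k$ to locate a single subpath $s$ of $b$ that works uniformly in $k$, $a$, and $c$. First I would record the one-step estimate: since $b$ is $f$-legal, $[f(b)] = f(b)$ is legal and has length $\lambda_f\, l(b)$, and applying the Bounded Cancellation Lemma at each of the two junctions of $a \cdot b \cdot c$ shows that $[f(a \cdot b \cdot c)]$ contains as a subpath a copy of $f(b)$ with a segment of length less than $C$ deleted from each end. This surviving core is again legal, being a subpath of the legal path $f(b)$.

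Next I would iterate this estimate. Let $b^{(k)}$ denote the subpath of $f^k(b)$ that survives in $[f^k(a \cdot b \cdot c)]$, with $b^{(0)} = b$; each $b^{(k)}$ is legal. Writing $[f^{k-1}(a \cdot b \cdot c)]$ with its distinguished legal core $b^{(k-1)}$ and applying the one-step estimate to a single further application of $f$, I find that $b^{(k)}$ is $f(b^{(k-1)})$ with less than $C$ trimmed from each end. Let $L_k$ and $R_k$ be the lengths trimmed from the left and right of $f^k(b)$ to obtain $b^{(k)}$, measured inside $f^k(b)$. Since $f$ scales the length of legal paths by $\lambda_f$, the one-step estimate yields $L_k < \lambda_f L_{k-1} + C$ with $L_0 = 0$, and likewise for $R_k$. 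Solving the recursion gives $L_k < C(\lambda_f^k - 1)/(\lambda_f - 1)$, so $L_k/\lambda_f^k < C/(\lambda_f - 1)$ and similarly $R_k/\lambda_f^k < C/(\lambda_f - 1)$. The hypothesis $l(b) > 2C/(\lambda_f - 1)$ then forces $L_k + R_k < \lambda_f^k\, l(b)$, so every $b^{(k)}$ is nontrivial and the induction is self-consistent.

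Finally I would define $s$ to be $b$ with a segment of length exactly $C/(\lambda_f - 1)$ removed from each end; this is nontrivial precisely because $l(b) > 2C/(\lambda_f - 1)$. Let $s_k \subseteq b$ be the preimage of $b^{(k)}$ under the homothety $f^k$, namely $b$ with $L_k/\lambda_f^k$ removed on the left and $R_k/\lambda_f^k$ on the right. The bounds just obtained show $s \subseteq s_k$ for every $k \ge 1$, and therefore $f^k(s) \subseteq f^k(s_k) = b^{(k)}$, which is a subpath of $[f^k(a \cdot b \cdot c)]$. Since $s$ depends only on $b$, $C$, and $\lambda_f$, the same $s$ works for all $k$ and all extensions $a$, $c$.

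The main obstacle is the careful cancellation bookkeeping in the one-step estimate: I must ensure that the cancellations at the two junctions do not overlap, so that ``trim less than $C$ from each end'' is legitimate, and that the surviving core is correctly identified as a genuine subpath of $f^k(b)$ rather than being consumed entirely. The hypothesis that $b$ is long, propagated through the inductively maintained inequality $L_k + R_k < \lambda_f^k\, l(b)$, is exactly what keeps the two trimmed regions disjoint at every stage of the iteration.
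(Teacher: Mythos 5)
Your proposal is correct and follows essentially the same route as the paper's proof: a one-step bounded-cancellation estimate showing the legal middle segment loses less than $C$ from each end, an induction propagating this (your recursion $L_k < \lambda_f L_{k-1} + C$ is just the geometric-series bookkeeping the paper leaves implicit), and the same final choice of $s$ obtained by trimming $\tfrac{C}{\lambda_f-1}$ from each end of $b$. The only difference is expository: you accumulate the trimmed lengths in the image and pull back once at the end, whereas the paper pulls back at each stage, but the constants and conclusion are identical.
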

\begin{proof}
Let $a\cdot b \cdot c$ be an immersed path. By hypothesis and bounded cancellation, $[f(a)] = x \cdot u, f(b) = \bar u \cdot y \cdot v , [f(c)] = \bar v \cdot z$ and $[f(a \cdot b \cdot c)] = x \cdot y \cdot z$ where $l(u), l(v) < C$ and $y$ is nontrivial and $f$-legal. Furthermore, $l(y) = l(f(b)) - l(u) - l(v) > \lambda_f \cdot l( b) - 2C > \frac{2C}{\lambda_f - 1}$. The subpath of $b$ corresponding to $y$ has length $> \frac{2C}{\lambda_f - 1} - \frac{2C}{\lambda_f}$. By induction, there is a nontrivial subpath $s$ of $b$ such that $f^k(s)$ is a subpath of $[f^k(a \cdot b \cdot c)]$ for any $k \ge 1$. By removing subpaths of length $\frac{C}{\lambda_f - 1}$ from the start and end of $b$, $s$ can be chosen independent of $a$ and $c$.
\end{proof}

\begin{defn}\label{crit}The number $\frac{2C}{\lambda_f - 1}$ is known as the {\bf critical constant}.
\end{defn}

\section{The Action of Injective Endomorphisms on Outer Space}\label{cvf}

Culler-Vogtmann introduced outer space in \cite{CV86} as a topological space with a nice $\Out(F)$-action. We will give two equivalent definitions and then describe the $\Out(F)$-action and, more generally, the semiaction of an injective endomorphism with respect to the two descriptions of outer space. See Karen Vogtmann's survey paper \cite{Vogt02} for details.

\begin{defn}In general, {\bf trees} refers to {\it real trees} ($0$-hyperbolic geodesic spaces) and a tree is {\bf simplicial} if it is homeomorphic to a locally finite CW-complex. 
The {\bf Culler-Vogtmann outer space} of $F$, denoted by $CV(F)$, is the set of connected simplicial trees, $T$, with a free minimal (left) action of $F$, $\alpha: F \to \mathrm{Isom}(T)$, up to the equivalence: $(T_1, \alpha_1) \sim (T_2, \alpha_2)$ if there is an $F$-equivariant homothety $f:T_1 \to T_2$, i.e., $f  \circ \alpha_1(g) = \alpha_2(g) \circ f$ for all $g \in F$. 
Alternatively, $CV(F)$ is the set of connected finite metric core graphs, $\Gamma$, with a marking $\alpha: F \to \pi_1(\Gamma)$, up to the equivalence: $(\Gamma_1, \alpha_1) \sim (\Gamma_2, \alpha_2)$ if there is a homothety $f: \Gamma_1 \to \Gamma_2$ such that $[f_* \alpha_1] = [\alpha_2]$. Identify $\pi_1(\Gamma)$ with the group of deck transformations of $\tilde \Gamma$ to get the correspondence between the two descriptions. 
\end{defn}

There are several equivalent ways of defining a topology on $CV(F)$. We need two of them for this paper; the first comes from length functions:  for any representative $(T, \alpha)$ of an equivalence class in $CV(F)$, let $l_{(T,\alpha)}: F \to \mathbb R$ be the length function that maps $g$ to its translation distance in $(T, \alpha)$. The equivalence class $[T, \alpha]$ determines a projective length function $l_{[T,\alpha]} \in \mathbb P \mathbb R^F$. The function $\iota: CV(F) \to \mathbb P \mathbb R^F$ given by $[T,\alpha] \mapsto l_{[T,\alpha]}$ is injective and the closure of its image is compact \cite{CV86}. Pullback the topology on $\mathbb{PR}^F$ via $\iota$ to get a topology on $CV(F)$. Length functions will also be useful in establishing the existence of limit trees in the next section.

The second definition of the topology is more concrete: for an equivalence class $[\Gamma, \alpha] \in CV(F)$, choose a representative such that $\Gamma$ has no bivalent vertices and $\mathrm{vol}(\Gamma) = 1$; the volume of a metric graph $\mathrm{vol}(\Gamma)$ is the sum of the lengths of all edges in the graph. Let $n$ be the number of edges in $\Gamma$ and identify $\sigma(\Gamma,\alpha)$ with the $(n-1)$-simplex one gets by varying the lengths of the edges of $\Gamma$ to get homeomorphic $\Gamma'$ while still maintaining the equality $\mathrm{vol}(\Gamma') = 1$. In the tree description of $CV(F)$, this variation corresponds to equivariantly varying the metric on $(\tilde \Gamma, \alpha)$ to get $(\tilde \Gamma', \alpha')$. 
This gives us a decomposition of $CV(F)$ into a disjoint union of open simplices $\sigma(\Gamma, \alpha)$. Attaching maps for these simplices are given by decreasing the volume of some forest of $\Gamma$ to $0$. This decomposition of $CV(F)$ and description of attaching maps makes $CV(F)$ a locally finite {\it open simplicial complex}, i.e., a simplicial complex with some missing faces corresponding to collapsing noncontractible subgraphs. The set of open simplices of $CV(F)$, denoted by $SCV(F)$, can be made into a locally finite simplicial complex known as the {\it spine of outer space} but, for the most part, we will treat it as a set with no added structure.

{~}

It is a theorem of Culler-Vogtmann that $CV(F)$ (with either of the equivalent topologies) is contractible \cite{CV86, Vogt17}. There is a natural right action of $\Out(F)$ on $CV(F)$ given by $[T, \alpha] \cdot [\phi] = [T, \alpha\phi]$ for $[T,\alpha]\in CV(F)$ and $[\phi] \in \Out(F)$. Furthermore, this action induces an action on $SCV(F)$: $\sigma(T, \alpha) \cdot [\phi] = \sigma(T, \alpha\phi)$. For an injective (outer) endomorphism $\phi:F \to F$, there is a right (semi)action given by $[T, \alpha] \cdot [\phi] = [T', \alpha \phi]$ where $T'$ is the minimal tree of the {\it twisted} action $\alpha \phi: F \to \mathrm{Isom}(T)$, i.e., $T'$ is the minimal tree for $\phi(F)$. This induces an action on $SCV(F)$ by forgetting the metrics: $\sigma(T, \alpha) \cdot [\phi] = \sigma(T', \alpha\phi)$. For an $F$-tree $(T, \alpha)$ and a marked metric graph $(\Gamma, \alpha)$, we shall abuse notation and write only $T$ and $\Gamma$ when the actions and markings are clear. 
The goal of the next section is to describe dynamics of the action of irreducible nonsurjective endomorphisms on $CV(F)$. We will show that the action has a unique attracting point and no other fixed points.

We now give the second description of the action of an injective endomorphism $\phi:F \to F$ on $CV(F)$ and $SCV(F)$. Let $\Gamma$ be a marked metric graph, i.e., $[\Gamma, \alpha] \in CV(F)$. Fix any topological representative $f:\Gamma \to \Gamma$ for $\phi$. For any $i \ge 1$, the map $f^i:\Gamma \to \Gamma$ factors  as $f^i = v_i h_i$ where $h_i: \Gamma \to S_i'$ is a composition of folds and $v_i: S_i' \to \Gamma$ is an immersion. The graph $S_i'$ is subdivided so that the immersion is simplicial. If we let $S_i \subset S_i'$ be the core subgraph, then this is the Stallings subgroup graph for $\phi^i(F)$ and the graphs fits in the following commutative diagram:

\[ \xymatrix{ 
 & S_1 \ar@{^{(}->}[d] & & S_i \ar@{^{(}->}[d] & S_{i+1} \ar@{^{(}->}[d] & \\
\Gamma \ar@{=}[d] \ar[r] & S_1' \ar[d]^{v_1} \ar[r] & \ar[r] \cdots & S_i' \ar[d]^{v_i} \ar[r] & S_{i+1}' \ar[d]^{v_{i+1}} \ar[r] & \cdots\\
\Gamma \ar[r]^f & \Gamma \ar[r]^f & \cdots \ar[r]^f & \Gamma \ar[r]^f & \Gamma \ar[r]^f & \cdots
}\]

Since $\phi$ is injective, the composition of folds $h_i: \Gamma \to S_i'$ is a homotopy equivalence which induces a marking $h_{i*} \alpha: F \to \pi_1(S_i') = \pi_1(S_i)$. Pullback the metric on $\Gamma$ via the immersion $v_i:S_i \to \Gamma$ to get a metric on $S_i$. By construction, $\tilde v_i(\tilde S_i) \subset \tilde \Gamma$ is the minimal tree for $\phi^i(F)$ and $[\tilde \Gamma] \cdot [\phi]^i = [\tilde S_i]$ as $\tilde v_i$ is an isometric embedding.  Therefore, in terms of marked metric graphs, the action is given by $[\Gamma, \alpha] \cdot [\phi]^i = [S_i, h_{i*} \alpha]$. As the immersion $v_i$ and folds $h_i$ do not depend on the metric on $\Gamma$, we see that the action of $\phi$ on $CV(F)$ is piecewise-linear with respect to the open simplicial structure and it induces the action on $SCV(F)$ by forgetting the metrics.

The following lemma tells us precisely when $\phi$ fixes an element of $SCV(F)$ or $CV(F)$. 
\begin{lem}\label{fix}Let $\phi:F \to F$ be an injective endomorphism. Then:
\begin{enumerate}
\item $\sigma(\Gamma) \in SCV(F)$ is fixed by $\phi$ if and only if $\phi$ is represented by an immersion on $\Gamma$.
\item $[\Gamma] \in CV(F)$ is fixed by $\phi$ if and only if $\phi$ is represented by a local homothety on $\Gamma$.
\end{enumerate}
\end{lem}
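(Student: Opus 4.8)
The plan is to work directly with the explicit description of the (semi)action established just above the lemma: $[\Gamma,\alpha]\cdot[\phi] = [S_1, h_{1*}\alpha]$, where a chosen topological representative $f:\Gamma\to\Gamma$ factors as $f = v_1 h_1$ with $h_1$ a composition of folds and $v_1:S_1\to\Gamma$ the immersion onto the core of the image, and where $S_1$ carries the metric pulled back from $\Gamma$ along $v_1$. The two structural facts I would exploit are: (i) the action is independent of the choice of $f$, so I am free to compute it with whichever representative is most convenient; and (ii) by construction $v_1$ is a \emph{local isometry} (it lifts to an isometric embedding $\tilde v_1$). Fact (ii) is precisely what will separate statement (1) from statement (2).

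For the backward implications I would feed the given representative into the action. If $\phi$ is represented by an immersion $m:\Gamma\to\Gamma$, I take $f = m$; since $m$ is already locally injective its Stallings factorization is trivial ($h_1 = \mathrm{id}$, $v_1 = m$, $S_1' = \Gamma$), and as $\Gamma$ is a core graph we get $S_1 = \Gamma$ with marking $\alpha$. Hence $\sigma(\Gamma,\alpha)\cdot[\phi] = \sigma(\Gamma,\alpha)$, proving one direction of (1). The pulled-back metric, $e \mapsto l(m(e))$, need not equal the original, which is exactly why only the open simplex is fixed. If moreover $m$ is a local homothety of ratio $\lambda$, then $l(m(e)) = \lambda\, l(e)$ for every edge, so the new metric is a uniform rescaling and $[S_1,\alpha] = [\Gamma,\alpha]$ in $CV(F)$; this gives the backward direction of (2).

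For the forward implications I would unwind the fixed-point hypothesis into a marking-preserving map and then reassemble a representative. If $\sigma(\Gamma,\alpha)$ is fixed, then $\sigma(S_1, h_{1*}\alpha) = \sigma(\Gamma,\alpha)$, so there is a homeomorphism $k:S_1\to\Gamma$ with $[k_*(h_{1*}\alpha)] = [\alpha]$; here I use that $h_1$ is a homotopy equivalence (injectivity of $\phi$) to identify $\pi_1(S_1) = \pi_1(S_1')$ and make sense of the marking. I then propose $v_1 k^{-1}:\Gamma\to\Gamma$ as the sought representative. It is locally injective because $k^{-1}$ is a homeomorphism and $v_1$ is an immersion, and it represents $\phi$ via $[(v_1 k^{-1})_*\alpha] = [v_{1*}h_{1*}\alpha] = [f_*\alpha] = [\alpha\phi]$, using $[k^{-1}_*\alpha] = [h_{1*}\alpha]$. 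For (2) the hypothesis upgrades $k$ to a homothety, and since $v_1$ is a local isometry the composite $v_1 k^{-1}$ is a local homothety, exactly as required.

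The main obstacle I anticipate is the bookkeeping in this forward direction: transporting the marking correctly through the identification $\pi_1(S_1)=\pi_1(S_1')$, keeping the inner-automorphism ambiguity under control when inverting $k$, and—most importantly—tracking the metric carefully enough to certify that in case (2) the assembled map is genuinely a \emph{local homothety} and not merely an immersion. The conceptual heart is the single observation that $v_1$ is a local isometry: composed with a homeomorphism it yields an immersion (giving (1)), while composed with a homothety it yields a local homothety (giving (2)).
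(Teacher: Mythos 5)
Your proposal is correct and follows essentially the same route as the paper: the paper likewise reads the fixed-point condition off the factorization $f = v_1 h_1$, concluding that $h_1$ is homotopic to a homeomorphism (your $k^{-1}$) so that $f$ is homotopic to the immersion $v_1 k^{-1}$, and for (2) that $v_1$ and $h_1$ induce the same projective metric, making $f$ a local homothety. The only difference is that you also write out the easy converse directions, which the paper leaves implicit in its description of the action.
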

\begin{proof}Suppose $\sigma(\Gamma, \alpha) \cdot [\phi] = \sigma(\Gamma, \alpha)$, i.e., $\sigma(S_1, h_{i*} \alpha) = \sigma(\Gamma, \alpha)$. Then the composition of folds and core retraction $h_1: \Gamma \to S_1$ is homotopic to a homeomorphism. As $f = v_1 h_1$ and $v_1$ is an immersion, we get that $f$ is homotopic to an immersion. Suppose $[\Gamma, \alpha] \cdot [\phi] = [\Gamma, \alpha]$. By the previous sentence, we may assume $f$ is an immersion, $S_1 = S_1'$, and $h_1: \Gamma \to S_1$ is a homeomorphism. Finally, for the point $[\Gamma, \alpha]$ to be fixed, the immersion $v_1:S_1 \to \Gamma$ and homeomorphism $h_1: \Gamma \to S_1$ must induce the same projective metric on $S_1$. Thus $f$ is (homotopic to) a local homothety.
\end{proof}

We end the section with a description of the action of the endomorphism in Example~\ref{sapir}.
\begin{eg}\label{sapir2} Recall $F=F(a,b)$ and $\phi:F \to F$ is given by $(a,b)\mapsto (ab, ba)$. In rank~2, the spine of outer space has the structure of a regular trivalent tree with a spike at the midpoint of edges. Set $F = \pi_1(R_2)$, then the standard rose is the marked graph $R_* = (R_2, id_F)$ and all other roses are given by $(R_2, \varphi)$ for some $[\varphi] \in \Out(F)$. Let $(B, \beta)$ be the barbell graph attached to $R_*$ and let $(T, \theta)$ be the theta graph between $R_*,~(R_2, \varphi_a)$, and $(R_2, \varphi_b)$ where $\varphi_a:(a,ab)$ and $\varphi_b:(ba, b)$ are the generators of $\Out(F)$.

To compute $\sigma(R_2, \varphi) \cdot [\phi]$, first represent $\varphi \phi$ on $R_*$ then fold. The composition of folds gives the resulting marked graph. For instance, $\sigma(R_2, \varphi_a)\cdot[\phi] = \sigma(T, \theta) = \sigma(R_2, \varphi_b) \cdot [\phi]$, $\sigma(R_2, \varphi_a^{-1}) \cdot [\phi] = \sigma(B, \beta)$, and $\sigma(R_2, \varphi_a^{-1}) \cdot [\phi]^2 = \sigma(T, \theta \varphi_a^{-1})$. Figure \ref{fig1} below illustrates one of these computations. Along these lines, we can show that \[ S = \left\{ \sigma(R_*),~\sigma(T, \theta),~\sigma(T, \theta \varphi_a^{-1}),~\sigma(B, \beta) \right\}\] is the set of $\phi$-periodic elements in $SCV(F)$; the first two are $\phi$-fixed while the latter two have $\phi$-period 2. By inducting on the roses of the spine, we can verify $SCV(F) \cdot [\phi] = S$.
\end{eg}

\begin{figure}[h]
 \centering 
 \includegraphics[scale=0.36]{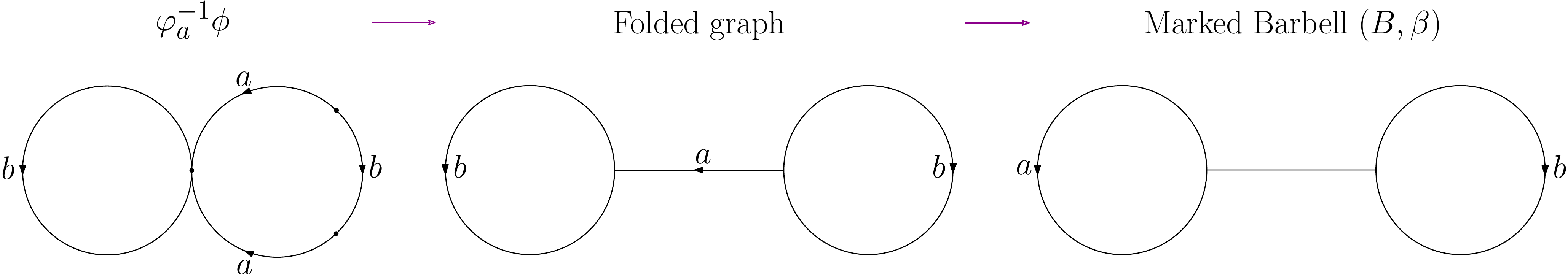}
 \caption{Computation showing $\sigma(R_2, \varphi_a^{-1}) \cdot [\phi] = \sigma(B, \beta)$.}
 \label{fig1}
\end{figure}

\section{Irreducible Nonsurjective Endomorphisms are Immersions}\label{immersions}

Given an irreducible train track map $f:\Gamma \to \Gamma$ representing an injective endomorphism $\phi:F \to F$, then the iterates of $\phi$ act on the $F$-tree $\tilde \Gamma$ by taking minimal trees of their twisted actions. If $l_{\tilde \Gamma}:F \to \mathbb R$ is the length function for $\tilde \Gamma$, then $l_{\tilde \Gamma \cdot \phi^n} = l_{\tilde \Gamma}\phi^n$. We define the limit tree $T_f$ to be the tree whose length function $l:F \to \mathbb R$ is given by \[ l = \lim_{n \to \infty} \lambda_f^{-n} \cdot \left( l_{\tilde \Gamma} \phi^n\right). \] As $f$ is an irreducible train track map, this limit exists by bounded cancellation. The $F$-action on $T_f$ is free if and only if $\phi$ is atoroidal, i.e., it has no periodic nontrivial conjugacy class/cyclic subgroup. The class $[T_f]$ need not be in $CV(F)$ and, in fact, it is not when $\phi$ is an infinite-order automorphism.

\begin{lem}[{\cite[Lemma~3.4]{BFH97}}]\label{sink}If $f:\Gamma \to \Gamma$ is a clean representative for $\phi:F \to F$, then $[T] \cdot [\phi]^n \to [T_f]$ for all $[T] \in CV(F)$.
\end{lem}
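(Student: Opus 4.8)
The plan is to translate the claim into a statement about length functions and then feed it through the Perron--Frobenius structure of the attracting lamination. Recall that the semiaction is recorded projectively by $l_{[T]\cdot[\phi]^n}=l_T\circ\phi^n$, and that by definition $\lambda_f^{-n}(l_{\tilde\Gamma}\circ\phi^n)\to l_{T_f}$ pointwise on conjugacy classes. Since the topology on $CV(F)$ and its closure is the subspace topology from $\mathbb P\mathbb R^F$, it suffices to produce a constant $c=c(T)\in(0,\infty)$, \emph{independent of the conjugacy class $g$}, with
\[ \lambda_f^{-n}\, l_T(\phi^n(g)) \;\longrightarrow\; c\cdot l_{T_f}(g)\qquad\text{for every } g. \]
For each $g$ I would fix its immersed loop $\rho$ in $\Gamma$, so that $\phi^n(g)$ is carried by the reduced loop $[f^n(\rho)]$ and $l_\bullet(\phi^n(g))=l_\bullet([f^n(\rho)])$ for $\bullet\in\{\Gamma,T\}$.

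First I would control the combinatorics of $[f^n(\rho)]$. Since $f$ is a train track, the number of illegal turns in $[f^n(\rho)]$ is non-increasing in $n$, hence eventually equal to some constant $k_0$; thus for large $n$ the loop is a cyclic concatenation of $k_0$ maximal legal segments. By the critical-constant estimate (Lemma~\ref{critLem}) together with bounded cancellation, each application of $f$ multiplies total length by $\lambda_f$ up to an additive error localized at the $k_0$ illegal turns, so $l_\Gamma([f^n(\rho)])$ grows like $\lambda_f^n$ while the legal segments grow like $\lambda_f^n$ and remain boundedly many. Next, using primitivity of $A(f)$, the normalized edge-count vector of $f^n(\rho)$ converges to the direction of the Perron--Frobenius right eigenvector, independently of $\rho$; passing from $f^n(\rho)$ to its reduction $[f^n(\rho)]$ only alters edge counts by a quantity concentrated at the $k_0$ illegal turns and bounded per step, negligible against the total mass $\sim\lambda_f^n$. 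In the language of geodesic currents this says $\lambda_f^{-n}\phi^n_*\eta_g\to c_g\,\mu_\Lambda$ weakly, where $\mu_\Lambda$ is the unique (up to scale) invariant current on the attracting lamination $\Lambda$ and $c_g$ is the projection of the edge-count of $\rho$ onto the left eigenvector.

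The length of an immersed loop in any $[U]\in\overline{CV(F)}$ is a continuous, $\mathbb R_{\ge0}$-linear function of the underlying current, namely the intersection pairing $\langle U,\cdot\rangle$. Applying this to $U=\tilde\Gamma$ recovers $l_{T_f}(g)=\lim\lambda_f^{-n}l_\Gamma(\phi^n g)=c_g\langle\tilde\Gamma,\mu_\Lambda\rangle$, while applying it to $U=T$ gives $\lim\lambda_f^{-n}l_T(\phi^n g)=c_g\langle T,\mu_\Lambda\rangle$. Dividing, the $g$-dependence cancels and we obtain the displayed limit with $c(T)=\langle T,\mu_\Lambda\rangle/\langle\tilde\Gamma,\mu_\Lambda\rangle$. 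Here the hypothesis that $f$ is clean does the essential work: connectedness of all Whitehead graphs forces $\Lambda$ to be \emph{filling}, guaranteeing $\langle U,\mu_\Lambda\rangle\in(0,\infty)$ for every $[U]\in CV(F)$, so $c(T)$ is positive and finite and the limit really is $[T_f]$ rather than a degenerate projective point; primitivity (the other half of clean) is what supplies the unique invariant current driving the equidistribution.

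I expect the main obstacle to be the length comparison in $T$, i.e.\ justifying that $\lambda_f^{-n}l_T([f^n(\rho)])$ converges at all. The danger is that $[f^n(\rho)]$ has order $\lambda_f^n$ turns, so a priori the cancellation incurred when tightening its image in $T$ could be of the same order as its length, and a naive turn-by-turn bound fails. The way around this is precisely to phase the estimate through the current $\mu_\Lambda$: rather than bounding cancellation directly, one uses that the \emph{reduced} counting currents $\lambda_f^{-n}\phi^n_*\eta_g$ converge weakly and that $\langle T,\cdot\rangle$ is continuous, so the potentially large cancellation is absorbed into the convergent current. Verifying weak convergence of the reduced currents --- controlling the discrepancy between $f^n(\rho)$ and $[f^n(\rho)]$ via the bounded, non-increasing illegal-turn count --- is therefore the technical heart of the argument.
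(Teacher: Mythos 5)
First, a point of context: the paper does not prove this lemma at all --- it is imported verbatim from \cite[Lemma~3.4]{BFH97}, with a remark that the original argument (a direct analysis of legal segments via bounded cancellation) applies unchanged to clean representatives of injective, possibly nonsurjective endomorphisms. So there is no in-paper proof to match; your proposal, phrased through geodesic currents and the intersection pairing, is a genuinely different (and anachronistic relative to \cite{BFH97}) route. The overall skeleton --- reduce to length functions, show $\lambda_f^{-n}\eta_{[f^n(\rho)]}\to c_g\,\mu_\Lambda$ weakly, and apply continuity and positivity of $\langle T,\cdot\rangle$ for $T\in cv(F)$ --- is a viable strategy.

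However, there is a genuine gap in the step you flag as routine. You claim that passing from $f^n(\rho)$ to its reduction $[f^n(\rho)]$ alters edge counts by a quantity that is ``bounded per step'' at the $k_0$ illegal turns and hence negligible against $\lambda_f^n$. That inference is false: bounded cancellation per step does not give bounded (or even $o(\lambda_f^n)$) total discrepancy, because mass cancelled at step $m$ would have grown by a factor $\lambda_f^{n-m}$ by step $n$. Concretely, if $D_n = l(f^n(\rho)) - l([f^n(\rho)])$ then the recursion only gives $D_{n+1}\le \lambda_f D_n + 2Ck_0$, i.e.\ $D_n = O(\lambda_f^n)$, and this order is achieved: once an illegal turn becomes degenerate under some $f^m$, the identified germs share an initial segment whose $f^{n-m}$-image has length comparable to $\lambda_f^{n-m}$. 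The cleanest symptom of the error is a $\phi$-periodic conjugacy class $g$ (the lemma is stated for clean maps, which may have these): there $l([f^n(\rho)])$ is bounded, so the correct $c_g$ is $0$, whereas your identification of $c_g$ as the pairing of the edge-count vector of $\rho$ with the strictly positive left Perron--Frobenius eigenvector would force $c_g>0$ and hence $l_{T_f}(g)>0$, which is wrong. The repair is to run the equidistribution argument not on $f^n(\rho)$ but on the reduced loop $[f^n(\rho)]$ directly: it splits into at most $k_0$ maximal legal segments, Lemma~\ref{critLem} extracts from each segment exceeding the critical constant a core that persists without further cancellation, these cores carry all the asymptotic mass and equidistribute to $\mu_\Lambda$ by primitivity, and the resulting constant is $c_g = l_{T_f}(g)/\langle\tilde\Gamma,\mu_\Lambda\rangle\ge 0$ (with the degenerate case $c_g=0$ handled by the Lipschitz comparison $l_T\le K\,l_\Gamma$). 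With that substitution your argument closes; note also that for $[T]\in CV(F)$ positivity of $\langle T,\mu_\Lambda\rangle$ is automatic for any nonzero current, so the appeal to the lamination being filling is not needed here.
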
 
In the lemma, the limit is taken in the space of projective length functions, $\mathbb{PR}^F$. We use $[T_\phi] = [T_f]$ and $\lambda_\phi = \lambda_f$ to emphasize independence from the choice of train track representative $f$. When $\phi$ is represented by an irreducible immersion $f:\Gamma \to \Gamma$ and $\Gamma$ has the train track linear structure, then $f$ is a local $\lambda_\phi$-homothety, $T_\phi = \tilde \Gamma$, and $[T_\phi] \in CV(F)$. In a certain sense, the immersion is unique since $[\Gamma]$ is the unique $\phi$-fixed point in $CV(F)$.

{~}

In the general setting considered in the previous section, not much can be said about the graphs $S_i$ and immersions $v_i: S_i' \to \Gamma$. However, if $f:\Gamma \to \Gamma$ is assumed to be a clean representative, we gain more control; for example, we will show in this case that $S_i'$ is a core graph, hence, $S_i = S_i'$. To do this, we need to introduce a new structure:

\begin{defn}Let $f:\Gamma \to \Gamma$ be an irreducible train track map and, for some $i \ge 1$,  $f^i = hg$ where $g:\Gamma \to X$ and $h:X \to \Gamma$ are surjective graph maps and $X$ is a finite core graph. A turn at a vertex $v$ of $X$ is {\bf legal} if its image under $h$ is an $f$-legal turn. The {\bf relative Whitehead graph} of $(X, g, h)$ at $v$ is a simple graph whose vertex set is the set of half-edges at $v$, $T_v(X)$, and a turn at $v$ is an edge of the relative Whitehead graph if it is in the edge-path $g(f^j(e))$ for some $j \ge 0$ and edge $e$ in $\Gamma$. Note that edges of the relative Whitehead graphs are legal.
\end{defn}

The next lemma tells us how subdivision and folding affect the relative Whitehead~graphs.
\begin{lem}\label{rel1}Suppose $f^i: \Gamma \xrightarrow{g} X \xrightarrow{h} \Gamma $ has connected relative Whitehead graphs and $h=h_2 h_1$ where $h_1: X \to Y$ is a subdivision or a single fold. Then $Y$ is a core graph and the relative Whitehead graphs of $(Y, h_1 g, h_2)$ are connected.
\end{lem}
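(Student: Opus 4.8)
The plan is to handle the two moves separately and, in each case, to track how the edge-paths $g(f^j(e'))$ defining the relative Whitehead graphs transform under $h_1$. The guiding principle is that a turn is taken in $(Y, h_1 g, h_2)$ exactly when it is the $h_1$-image of a turn taken in $(X, g, h)$: since $h_1 g(f^j(e')) = h_1(g(f^j(e')))$ and $h_1$ carries edge-paths to edge-paths, it suffices to know that $h_1$ introduces no backtracking. This is where the hypothesis enters. Because $h = h_2 h_1$, any turn collapsed by $h_1$ is degenerate under $h$, hence illegal, and therefore not an edge of any relative Whitehead graph of $(X, g, h)$ -- recall that edges of relative Whitehead graphs are legal. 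Consequently the paths $g(f^j(e'))$ never take a turn folded by $h_1$, so the relative Whitehead graphs of $Y$ are obtained from those of $X$ by the vertex identifications that $h_1$ performs on half-edges.

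If $h_1$ is a subdivision inserting a bivalent vertex $w$ in the interior of an edge $E = E_1 E_2$ of $X$, then $Y$ is the core graph $X$ with one extra bivalent vertex, so $Y$ is a core graph. At the old vertices the relative Whitehead graphs are unchanged, hence connected. At $w$ the only nondegenerate turn is $\{E_1, E_2\}$; since $g$ is surjective, $E$ lies on some $g(e')$ (take $j=0$), so this turn is taken and the relative Whitehead graph at $w$ is a single edge, which is connected.

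If $h_1$ is a single fold identifying two edges $e_1, e_2$ issuing from a vertex $v$ into one edge $e$ of $Y$, then $h(e_1) = h(e_2)$, so the turn $\{e_1, e_2\}$ at $v$ is illegal and untaken. I first check $Y$ is a core graph. When the far endpoints of $e_1, e_2$ are distinct, their merged image has valence $\ge 3$ automatically (its valence is $d_1 + d_2 - 1$ with $d_1, d_2 \ge 2$), so the only way a univalent vertex could arise is if $v$ (or, in the bigon case, the common far endpoint) had valence $2$; but then its sole turn would be $\{e_1, e_2\}$, which connectedness of the relative Whitehead graph would force to be a legal edge, contradicting illegality. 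Hence every vertex of $Y$ has valence $\ge 2$. For connectedness: the relative Whitehead graph of $Y$ at $v$ is that of $X$ at $v$ with the vertices $e_1, e_2$ merged into $e$, and merging vertices preserves connectedness; the relative Whitehead graph at the merged far vertex is the one-point union of the two connected graphs at the endpoints of $e_1$ and $e_2$, glued along $e_1 \sim e_2 = e$, hence connected; all other vertices are untouched.

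The main obstacle is the fold case, and within it the core-graph claim: one must rule out a univalent vertex, and the sole mechanism doing so is that the folded turn is illegal while the relative Whitehead graph at that vertex is connected -- this is exactly what forbids folding the only turn at a bivalent vertex. The rest is the routine verification that turns push forward correctly under $h_1$ (no backtracking) and that vertex-merging and one-point unions preserve connectedness of the relative Whitehead graphs.
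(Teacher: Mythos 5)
Your proof is correct and follows essentially the same route as the paper's: surjectivity of $g$ handles the new bivalent vertex in the subdivision case, the fold case rules out univalent vertices by noting that the folded turn is illegal while the sole turn at a bivalent vertex must be a legal edge of its connected relative Whitehead graph, and connectedness is preserved under the vertex identification at the origin and the one-point gluing at the terminal vertices. Your explicit observation that a turn collapsed by $h_1$ is degenerate under $h$, hence illegal, hence never taken by the defining paths is a point the paper leaves implicit, but it is the same argument.
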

\begin{proof}Suppose $h_1$ is a subdivision. If $v$ is a vertex of $Y$ that was added by the subdivision, then $T_v(Y)$ has two elements, $v \in \mathrm{Int}(e)$ for some edge $e'$ of $X$, and $v \in h_1(e')$. In particular, the two elements of $T_v(Y)$ are joined by an edge since $g$ is surjective.

If $v$ is a vertex of both $Y$ and $X$, then $T_v(Y) = T_v(X)$ and subdivision makes no changes to the relative Whitehead graph at $v$. So the relative Whitehead graph of $(Y, h_1g, h_2)$ is connected since we assumed $(X, g, h)$ has connected relative Whitehead graphs.

Suppose $h_1$ is a single fold. The only way to produce a univalent vertex in $Y$ with a fold is by folding a bivalent vertex of $X$. But bivalent vertices only have one turn which is legal since the relative Whitehead graphs of $(X, g, h)$ are connected.

At the origin vertex, the fold identifies two distinct vertices of the relative Whitehead graphs and this preserves connectedness. At the two terminal vertices, the fold identifies two distinct vertices, one from each relative Whitehead graph, and this gluing of two connected graphs produces a connected graph. If a vertex in $X$ is not part of the fold, then its image in $Y$ will have the same connected relative Whitehead graph.
\end{proof}

Recall that to construct $S_i'$, we subdivide and fold the map $f^i:\Gamma \to \Gamma$ to get the immersion $v_i: S_i' \to \Gamma$. Suppose $f$ was a clean map. For the base case, the relative Whitehead graphs of $(\Gamma, id_\Gamma, f^i)$ are connected as they are isomorphic to the Whitehead graphs of $f^i$. By Lemma \ref{rel1} and induction on folds, $S_i'$ is a core graph and the relative Whitehead graphs of $(S_i, h_i, v_i)$ are connected. So the immersion $v_i$ maps edge-paths connecting branch points ({\it natural edges}) of $S_i$ to $f$-legal edge-paths in $\Gamma$. The commutative diagram in the previous section becomes:
\[\xymatrix{
\Gamma \ar[r] \ar@{=}[d] & S_1 \ar[r] \ar[d]^{v_1} & S_2 \ar[r] \ar[d]^{v_2} & \cdots \ar[r] & S_i \ar[r] \ar[d]^{v_i} & S_{i+1} \ar[r] \ar[d]^{v_{i+1}} & \cdots \\
\Gamma \ar[r]^f & \Gamma \ar[r]^f & \Gamma \ar[r]^f & \cdots \ar[r]^f &  \Gamma \ar[r]^f & \Gamma \ar[r] & \cdots
}\]
Aside from the construction of this diagram and the next lemma, this section and the next are independent of each other. This lemma will only be used in the proof of Proposition~\ref{invSbgrp}, the main result of the next section.
\begin{lem}\label{rel2}Let $f, v_i, h_i$ be as in the previous paragraph. The map $\hat f^i: S_i \to S_i$ given by $\hat f^i = h_i v_i$ is a clean representative for $\phi^i$.
\end{lem}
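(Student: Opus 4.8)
The plan is to verify the three requirements in the definition of a clean representative for $\hat f^i = h_i v_i \colon S_i \to S_i$: that it represents $\phi^i$, that it is an expanding irreducible train track, and that its Whitehead graphs are connected; the last two combine via Proposition~\ref{clean} to give cleanness. The two identities I would record at the outset and use throughout are the semiconjugacy $v_i \circ \hat f^i = f^i \circ v_i$, which is immediate from $v_i h_i = f^i$, together with its consequences
\[ v_i \circ (\hat f^i)^k = f^{ik}\circ v_i, \qquad (\hat f^i)^k = h_i \circ f^{(k-1)i} \circ v_i \quad (k \ge 1). \]
That $\hat f^i$ represents $\phi^i$ is then routine: take the marking $\alpha_i = h_{i*}\alpha$, which is an isomorphism since $h_i$ is a composition of folds and $\phi$ is injective; as $[f^i_*\alpha] = [\alpha\phi^i]$ and $f^i_* = v_{i*}h_{i*}$, applying $h_{i*}$ gives $[\hat f^i_* \alpha_i] = [h_{i*} v_{i*} h_{i*}\alpha] = [h_{i*}\alpha\phi^i] = [\alpha_i\phi^i]$.

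Next I would show $\hat f^i$ is an expanding train track. The crucial input from the paragraph preceding the lemma is that, because $f$ is clean, the relative Whitehead graphs of $(S_i, h_i, v_i)$ are connected (Lemma~\ref{rel1} and induction on folds), so $v_i$ maps each natural edge of $S_i$ to an $f$-legal path. For an edge $e'$ of $S_i$ and any $k$, the identity $v_i((\hat f^i)^k(e')) = f^{ik}(v_i(e'))$ exhibits the image as $f^{ik}$ applied to an $f$-legal path, hence immersed because $f$ is a train track. Since $v_i$ is an immersion it reflects immersedness, as a backtrack in a path would map to a backtrack under $v_i$, so $(\hat f^i)^k(e')$ is itself immersed; the single turn at a bivalent vertex is legal for the same reason, so all iterates are locally injective there too. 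Thus $\hat f^i$ is a train track, and since $v_i$ is an isometric embedding and $\lambda_f > 1$, the lengths $l((\hat f^i)^k(e')) = \lambda_f^{ik}\, l(v_i(e'))$ blow up, so $\hat f^i$ is expanding.

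For irreducibility and connected Whitehead graphs I would exploit that $f$ is clean, hence has primitive transition matrix, together with $(\hat f^i)^k = h_i f^{(k-1)i} v_i$ and the surjectivity of $h_i$ on edges. Given edges $e'_a, e'_b$ of $S_i$, write $e'_a = h_i(e_*)$ for some edge $e_*$ of $\Gamma$; by primitivity $e_*$ lies in $f^{(k-1)i}(v_i(e'_b))$ for $k$ large, so $e'_a = h_i(e_*) \subseteq (\hat f^i)^k(e'_b)$, giving irreducibility. The same identity identifies the Whitehead graph of $\hat f^i$ at a vertex $w$ with the relative Whitehead graph of $(S_i, h_i, v_i)$ at $w$: turns of $(\hat f^i)^k(e')$ are turns of $h_i(f^{(k-1)i}(v_i(e')))$, hence relative Whitehead edges, and conversely every relative Whitehead edge, being a turn in some $h_i(f^j(e))$, is realized inside a suitable $(\hat f^i)^k(e')$ by primitivity. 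As the relative Whitehead graphs are connected, $\hat f^i$ is weakly clean, and Proposition~\ref{clean} upgrades this to clean.

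The step I expect to be the main obstacle is the train track property, namely confirming that the concrete map $h_i v_i$ and all its iterates are genuinely locally injective with no hidden cancellation. Everything hinges on combining two facts in the right order: that $v_i$ sends natural edges to $f$-legal paths (a consequence of the connected relative Whitehead graphs from Lemma~\ref{rel1}) and that the immersion $v_i$ reflects immersedness through the semiconjugacy $v_i(\hat f^i)^k = f^{ik} v_i$. Once local injectivity is in hand, irreducibility, cleanness, and the representative condition are comparatively formal.
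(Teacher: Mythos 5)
Your proposal is correct and follows essentially the same route as the paper: both rest on the identity $(\hat f^i)^{k} = h_i f^{i(k-1)} v_i$, deduce the train track property and primitivity of the transition matrix from the cleanness of $f$ together with the surjectivity of $h_i$ and the simplicial immersion $v_i$, and identify the Whitehead graphs of $\hat f^i$ with the connected relative Whitehead graphs of $(S_i, h_i, v_i)$. Your added explicit checks (the marking $h_{i*}\alpha$, and legality of the turn at a bivalent vertex via the relative Whitehead graph) only flesh out steps the paper leaves implicit.
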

\begin{proof} For all $n \ge 1$, $(\hat f^i)^{n+1} = (h_i v_i)^{n+1} = h_i ( v_i h_i )^{n} v_i = h_i f^{in} v_i $. Since $f$ is a train track, $v_i$ is a simplicial immersion, and $h_i$ folds illegal turns only, it follows that $\hat f^i$ is a train track too. Since $f$ is clean,  there is an $n \ge 1$ such that, for any edge $e$ in $\Gamma$, $f^{in}(e)$ is surjective and contains all the edges of the Whitehead graphs of $f^i$. As $h_i$ is surjective, $(\hat f^i)^{n+1}$ is surjective when restricted to any edge and $\hat f^i$ has a primitive transition matrix. Since folding only identifies vertices of relative Whitehead graphs, $h_i f^{in}(e)$ contains all the edges of the relative Whitehead graphs of $(S_i, h_i, v_i )$. As $v_i$ maps edges to edges, we have that the Whitehead graphs of $\hat f^i$ contains the relative Whitehead graphs of $(S_i, h_i, v_i)$ as subgraphs. Since $v_i$ is surjective, we have the reverse containment too. So the Whitehead graphs of $\hat f^i$ are connected and the map is clean.
\end{proof}

With this set-up, we are now ready to present a new proof of an unpublished theorem due to Patrick Reynolds \cite[Corollary~5.5]{Rey}. The first half of this proof is based on an argument due to Ilya~Kapovich showing that irreducible nonsurjective endomorphisms are expansive; c.f. \cite[Proposition~3.11]{Rey}.

\begin{thm}\label{part2}If $\phi:F\to F$ is an irreducible nonsurjective endomorphism, then it can be represented by a clean immersion, unique amongst irreducible immersions.\end{thm}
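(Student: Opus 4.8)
The plan is to obtain the immersion as a fixed point of the $\phi$-action on outer space and then read off cleanness and uniqueness from the sink dynamics. First I would record that $\phi$ is injective by Lemma~\ref{inj} and, being nonsurjective, of infinite order: every power $\phi^n$ is again nonsurjective, hence not an automorphism, so no power is inner. Theorem~\ref{tt} together with Corollary~\ref{clean2} then supplies a clean expanding train track representative $f:\Gamma\to\Gamma$ with $\lambda_\phi>1$, which in turn activates the sink dynamics of Lemma~\ref{sink}: $[T]\cdot[\phi]^n\to[T_\phi]$ for every $[T]\in CV(F)$, the limit being the projective point determined by $l=\lim_n\lambda_\phi^{-n}(l_{\tilde\Gamma}\phi^n)$.

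For existence, the real content is to show $[T_\phi]\in CV(F)$, for then $[T_\phi]$ is a $\phi$-fixed point of $CV(F)$ (projectively $l\circ\phi=\lambda_\phi\, l$) and Lemma~\ref{fix}(2) represents $\phi$ by a local homothety, hence an immersion, on the graph $T_\phi/F$. Here I would play off the two competing features of the orbit $\sigma(\Gamma)\cdot[\phi]^i=\sigma(S_i)$. On the one hand, by Lemma~\ref{rel1} each $S_i$ is a core graph with $\pi_1\cong F$, so its natural (bivalent-suppressed) structure has at most $3\operatorname{rank}(F)-3$ edges and only finitely many topological types occur. On the other hand, by Lemma~\ref{nonsurjSt} the subdivided $S_i$ have unboundedly many edges, reflecting that the immersions $v_i$ stretch each natural edge into an ever-longer $f$-legal path — the expansiveness underlying Kapovich's argument, quantified by the critical constant of Lemma~\ref{critLem}. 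Passing to a subsequence on which the natural type is constant and the projectivized pullback metrics converge, I would identify the limit with the sink $[T_\phi]$ and argue it is a genuine point of $CV(F)$: bounded complexity makes it simplicial of bounded rank, and the persistence and expansion of $f$-legal pieces prevents any conjugacy length from collapsing, so the limiting $F$-action is free and minimal. The resulting immersion is a train track (immersions and all their iterates are locally injective) and is irreducible because $\phi$ is; being an irreducible train track representative of an infinite-order irreducible $\phi$, it is automatically clean by Corollary~\ref{clean2}.

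Uniqueness then follows directly from the sink dynamics. If $f':\Gamma'\to\Gamma'$ is any irreducible immersion representing $\phi$, then equipping $\Gamma'$ with its Perron--Frobenius metric makes $f'$ a local $\lambda_\phi$-homothety with $\widetilde{\Gamma'}=T_\phi$, so $[\Gamma']=[T_\phi]\in CV(F)$, as recorded after Lemma~\ref{sink}. Since Lemma~\ref{sink} makes $[T_\phi]$ a global attractor, any $\phi$-fixed point of $CV(F)$ must equal it; hence every irreducible immersion yields the single projective point $[T_\phi]$, lying in one open simplex $\sigma(T_\phi/F)$ of $SCV(F)$. By Lemma~\ref{fix}(1) the fixed simplices coincide, so any two irreducible immersions agree as marked graphs.

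The step I expect to be the main obstacle is the non-degeneration inside the existence argument: showing the limit of the rescaled Stallings graphs is an honest simplicial tree with \emph{free} $F$-action, i.e. that $[T_\phi]$ lands in $CV(F)$ rather than on the boundary of its compactification, as it would for an infinite-order automorphism. Bounded topological complexity delivers simpliciality cheaply, but ruling out collapse of the action requires quantitatively tracking how the $f$-legal subpaths survive and grow under iteration; this is precisely where Lemma~\ref{critLem} and the cleanness of $f$ enter, and it is what distinguishes the nonsurjective case from the automorphism case.
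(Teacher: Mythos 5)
Your overall architecture coincides with the paper's: produce a clean train track $f:\Gamma\to\Gamma$, push $[\Gamma]$ around by $\phi$, show the sink $[T_\phi]$ actually lies in $CV(F)$, and then get cleanness from Corollary~\ref{clean2} and uniqueness from Lemma~\ref{fix} and Lemma~\ref{sink}. The uniqueness half of your argument is essentially the paper's and is fine. The problem is the existence half, where you correctly identify the main obstacle but do not resolve it, and where one of your supporting claims is false. ``Bounded topological complexity delivers simpliciality cheaply'' is not true: for an infinite-order automorphism the orbit $\sigma(\Gamma)\cdot[\phi]^n$ visits simplices of a single topological type (indeed of uniformly bounded complexity), yet $[T_\phi]$ is a non-simplicial tree outside $CV(F)$. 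Having the same topological type along a subsequence says nothing, because the markings can wander through infinitely many distinct simplices of $SCV(F)$; what you need is that the tail of the orbit lies in the closure of a \emph{single} marked simplex.

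The missing mechanism --- and the place where irreducibility is actually used in the existence proof --- is the following. Arrange, via Lemma~\ref{rel1}, that the natural edges of $S_i$ are $f$-legal. By Lemma~\ref{nonsurjSt} the $S_i$ grow, so eventually some natural edges exceed the critical constant; by Lemma~\ref{critLem} such edges contain subpaths that survive all subsequent folding, so every fold in $S_i\to S_j$ for $j>i$ is supported in the subgraph $K_i$ of \emph{short} natural edges. One then shows $K_i$ is a forest: an embedded loop in $K_i$ would have image of uniformly bounded legal-piece decomposition, hence would lift back into $K_i$, producing an invariant proper subgraph and contradicting irreducibility. This forest fact does double duty: (i) every loop in $S_i$ crosses a long legal edge and therefore expands, giving atoroidality and hence freeness of the $F$-action on $T_\phi$ (your ``no conjugacy length collapses'' assertion, which you left unsubstantiated); and (ii) since there are only finitely many combinatorially distinct ways to fold a forest, some composition $S_j\to S_{j+k}$ is homotopic to a homeomorphism, so $\sigma(\tilde S_j)\cdot[\phi]^k=\sigma(\tilde S_j)$ and the subsequence $[\tilde S_{j+kn}]$ stays in one closed simplex. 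Only then is $[T_\phi]$ simplicial (a collapse of a subgraph of $S_j$), and combined with freeness this puts $[T_\phi]\in CV(F)$ so that Lemma~\ref{fix} applies. Without the forest argument your proposal has no route from ``finitely many topological types'' to ``eventually one marked simplex,'' and no proof of freeness, so the existence claim remains open as written.
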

\begin{proof} 
Let $f:\Gamma \to \Gamma$ be a clean representative for $\phi: F \to F$ given by Corollary~\ref{clean2}. By the discussion following Lemma~\ref{rel1}, we now construct core graphs $S_i$, folds $h_i:\Gamma \to S_i$, and immersions $v_i: S_i \to \Gamma$ such that $f^i = v_i h_i$ and the natural edges of $S_i$ are legal, i.e., they consist of legal turns. By Lemma~\ref{nonsurjSt}, nonsurjectivity implies that the number of edges in $S_i$ is unbounded as $i \to \infty$. So for some $i \gg 0$, $S_i$ has a natural edge longer than the critical constant. By Lemma~\ref{critLem}, there is a nontrivial subpath $s$ of the long natural edge such that, for all $ j > i$, the folds in $S_i \to S_j$ are supported in the complement of $s$.

Fix an index $i \gg 0$ such that $S_i$ has the maximal number of natural edges longer than $\frac{2C\cdot L \cdot\lambda_\phi}{\lambda_\phi - 1}$, where $L = L( \mathrm{rank}(F))$ is the maximum number of natural edges of an embedded loop of a marked graph. Denote by $K_i$ the proper subgraph consisting of the remaining {\it short} natural edges of $S_i$ with length at most the critical constant. Let $\hat f: S_i \to S_i$ be a map representing $\phi$, i.e., $[\hat f_* h_{i*}] = [h_{i*} f_*]$. Suppose $K_i$ has an embedded loop $\rho$, then by construction $f v_i(\rho)$ decomposes into at most $L$ legal paths, each of length at most $\frac{2C \lambda_\phi}{\lambda_\phi - 1}$. So $[f v_i(\rho)]$ lifts to the loop $[\hat f(\rho)]$ in $K_i$. Thus if $K_i$ is not a forest, then $K_i$ is an $\hat f_i$-invariant proper subgraph up to homotopy. Since $\phi$ is irreducible,  $K_i$ must be a forest. Therefore, the loops in $S_i$ are growing exponentially, $\phi$ is atoroidal, and the action of $F$ on $T_\phi$ is free.

We showed in the previous paragraph that folds in the map $S_i \to S_j~(j > i)$ are supported in some forest $K \subset S_i$. Since there are finitely many combinatorially distinct ways to fold a forest, there are fixed $j > i$ and $k \ge 1$ such that the composition of folds $S_j \to S_{j+k}$ is homotopic to a homeomorphism. 
This means $[\tilde S_j], [\tilde S_{j+k}]$ lie in the simplex $\sigma(\tilde S_j) \subset CV(F)$. In particular, $\sigma(\tilde S_j) \cdot [\phi]^k = \sigma(\tilde S_j)$ and the sequence $[\tilde S_{j + kn}] ~ (n \ge 0)$ lies in $\sigma(\tilde S_j)$.

By definition of the limit tree $T_f$ and the action $\tilde \Gamma \phi^n = \tilde S_n$, we get: \[ T_f = \lim_{n \to \infty} \lambda_f^{-n} \tilde \Gamma \phi^n = \lim_{n \to \infty} \lambda_f^{-j-kn} \tilde \Gamma \phi^{j+kn} = \lim_{n \to \infty} \lambda_f^{-j-kn} \tilde S_{j+kn}.\]
Therefore, the limit tree is simplicial since $[T_f]$ is in the (simplicial) closure of $\sigma(\tilde S_j)$, i.e., the $F$-quotient of $T_f$, call it $\Gamma_0$, is a graph obtained by collapsing a subgraph of $S_j$ and rescaling the metric. As $T_f$ is free and simplicial, $[T_f] \in CV(F)$. By definition and Lemma~\ref{fix}, $[T_f]$ is fixed by $\phi$ and $\phi$ is represented by a local $\lambda_\phi$-homothety $f_0:\Gamma_0 \to \Gamma_0$. Furthermore, the local homothety $f_0$ is expanding since $\lambda_\phi > 1$; so $f_0$ has no invariant forests and the irreducibility of $\phi$ implies $f_0$ is clean by Corollary~\ref{clean2}. Uniqueness follows from Lemma~\ref{fix} and uniqueness of the fixed point $[T_\phi]$ due to Lemma~\ref{sink}.
\end{proof}

This  theorem tells us that the action of an irreducible nonsurjective endomorphism on $CV(F)$ has a unique fixed point that is also a global attracting point. Reynolds studied this action further; for instance, the action converges to the fixed point uniformly on compact sets and, for {\it admissible} endomorphisms, the action can be extended to the compactification of outer space $\overline{CV(F)}$ where the convergence is uniform.

We now use the action on outer space again to prove a partial converse to Theorem~\ref{part2}, which can also be thought of as a criterion for fully irreducible nonsurjective endomorphisms.

\begin{thm}\label{part3}If $\phi:F \to F$ is represented by a clean immersion whose Whitehead graphs have no cut vertices, then $\phi$ is nonsurjective and fully irreducible.\end{thm}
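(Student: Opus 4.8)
The plan is to prove the two assertions—nonsurjectivity and full irreducibility—separately, dispatching the first quickly and then reducing the second to a Whitehead-graph criterion. For nonsurjectivity, note that a clean immersion $f\colon\Gamma\to\Gamma$ has primitive transition matrix, so by the analysis in Proposition~\ref{clean} (a primitive matrix of size $\ge 2$ cannot be a permutation) it is expanding with $\lambda_\phi>1$; in particular $f$ is not a homeomorphism. An immersion lifts to an embedding of universal covers, so $\phi$ is injective (as in Lemma~\ref{inj}); were it also surjective it would be an automorphism, forcing the immersion $f$ to be a homotopy equivalence and hence a degree-one covering, i.e. a homeomorphism—contradicting $\lambda_\phi>1$. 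Since $F$ is Hopfian, injective-but-not-automorphism gives nonsurjective. I would also record the observation that drives the rest: because $f$ is an immersion it has no illegal turns, so every reduced loop $\rho$ satisfies $l([f^n(\rho)])=\lambda_\phi^{\,n}\,l(\rho)\to\infty$. Thus $\phi$ is atoroidal and every nontrivial conjugacy class is $\phi$-expanding.

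For full irreducibility I would argue by contradiction using the attracting lamination $\Lambda=\Lambda_\phi$ of $f$, which exists and is minimal because the transition matrix is primitive \cite{BFH97}. Suppose $\phi$ is not fully irreducible; then some iterate $\phi^n$ carries a proper free factor system $\mathcal F$, with $\phi^n$ permuting the conjugacy classes of its factors and mapping each into a conjugate of the next. Choose a nontrivial $w$ in one factor; by the atoroidal observation $w$ is expanding, and tracking conjugacy classes shows every $[\phi^{nm}(w)]$ remains carried by $\mathcal F$. Standard train-track theory \cite{BFH97} gives that the rescaled loops $[\phi^{nm}(w)]$ accumulate onto leaves of $\Lambda$, and since being carried by the fixed factor system $\mathcal F$ is a closed condition, the limit leaves lie in $\mathcal F$; minimality of $\Lambda$ then upgrades this to the statement that all of $\Lambda$ is carried by the proper free factor system $\mathcal F$.

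It remains to contradict this using the hypothesis, and here the absence of cut vertices is essential; this is the step I expect to be the main obstacle. The content I need is the Whitehead-graph criterion: if every Whitehead graph of $f$ is connected and has no cut vertex, then $\Lambda$ is \emph{filling}, i.e. not carried by any proper free factor system. Connectedness alone does not suffice—compare the argument of Corollary~\ref{clean2}, where a \emph{disconnected} Whitehead graph is blown up to exhibit a reduction—and a cut vertex plays the analogous role one iterate deeper. Concretely, I would realize $\mathcal F$ as a blow-up of $\Gamma$ compatible with $f$ and, at a vertex where the $\mathcal F$-carried leaves meet the remaining leaves of $\Lambda$, read off a separating vertex of the corresponding Whitehead graph, thereby producing a cut vertex (or a disconnection) and contradicting the hypothesis. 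Granting this criterion, no iterate of $\phi$ can be reducible, so $\phi$ is fully irreducible; note that this argument uses the Whitehead graphs of $f$ directly and requires no separate analysis of $f^n$, since $\Lambda$ is a single object independent of the power.

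Throughout, the sink dynamics of Lemma~\ref{sink}—with $[\Gamma]=[T_\phi]$ the global attractor, exactly as in the proof of Theorem~\ref{part2}—supply the ambient framework identifying $\Lambda$ and organizing the convergence $[\phi^{nm}(w)]\to\Lambda$. The crux of the whole proof is therefore the translation between a sublamination trapped in a proper free factor system and a cut vertex of a Whitehead graph; everything else is bookkeeping around minimality of $\Lambda$ and the atoroidal property forced by the immersion hypothesis.
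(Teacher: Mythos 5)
Your nonsurjectivity argument is correct (and more detailed than the paper's one-line dismissal), and your reduction of full irreducibility to the statement ``some proper free factor $A$ carries a leaf of $\Lambda$, hence all of $\Lambda$'' is a legitimate alternative framing. But the proof has a genuine gap exactly where you predict one: the ``Whitehead-graph criterion'' that connected Whitehead graphs without cut vertices force $\Lambda$ to be filling is the entire content of the theorem, and you do not prove it. Worse, the sketch you offer for it does not work as stated. You cannot in general ``realize $\mathcal F$ as a blow-up of $\Gamma$ compatible with $f$'': the map $f$ is a fixed immersion on $\Gamma$, and an invariant proper free factor need not be visible as a subgraph of any blow-up of $\Gamma$ that $f$ respects (Example~\ref{countereg} in the paper has $\phi(F)$ inside a proper free factor even though $f$ is a clean immersion on a rose). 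And the picture of ``a vertex where the $\mathcal F$-carried leaves meet the remaining leaves of $\Lambda$'' is internally inconsistent with your own minimality step, which concludes that \emph{all} of $\Lambda$ is carried by $\mathcal F$ --- there are no remaining leaves to meet. Note also that a lamination-carrying statement alone cannot suffice: via Proposition~\ref{supSbgrp} it only yields $\phi^k(F)\le gAg^{-1}$, which is consistent with $f$ being a clean immersion; the no-cut-vertex hypothesis must enter through a different mechanism.

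The paper's mechanism is the piece you are missing. One starts from a marked graph $\Gamma'$ containing the Stallings graph $\Delta_0=S(A)$ as an invariant subgraph for a representative of $\psi=i_g\circ\phi^n$, and uses the sink dynamics of Lemma~\ref{sink} together with local finiteness of the spine to flow $[\Gamma']$ into the fixed simplex: the iterated Stallings graphs $S_i$ eventually stabilize in $SCV(F)$, and after collapsing invariant forests the induced map is forced (by uniqueness of the attracting point $[T_\phi]=[\Gamma]$) to be $f^{nj}$ itself. Tracking $\Delta_0$ through this process produces a surjection $h_\infty:\Delta_0\to\Gamma$ that is an identification of finitely many vertices possibly followed by folds and a forest collapse, and it intertwines $g^j|_{\Delta_0}$ with $f^{nj}$. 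The contradiction is then read off at an identified vertex: the Whitehead graph of $f^{nj}$ there is a wedge of the Whitehead graphs at the preimage vertices, hence is disconnected or has a cut vertex. If you want to complete your proof, you need either this outer-space argument or an honest proof that a vertex identification (rather than a ``meeting of leaves'') is what a proper carrying free factor forces, and that such an identification produces a cut vertex.
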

\begin{proof} Since $\phi$ is represented by a clean immersion $f:\Gamma \to \Gamma$, it is nonsurjective. Suppose $\phi^n(A) \le gAg^{-1}$ where $A \le F$ is a proper free factor, $g \in F$, and $n \ge 1$. Let $S(A)$ be the Stallings subgroup graph corresponding to $A$ with respect to $\Gamma$. Set $\psi = i_g \circ \phi^n$ so that $\psi(A) \le A$ and the immersion $f^n$ lifts to an immersion $g: S(A) \to S(A)$ representing $\left.\psi\right|_A$. Complete $\Delta_0 = S(A)$ to a graph $\Gamma'$ with a marking $\pi_1(\Gamma') \cong F$ and extend $g$ to the rest of $\Gamma'$ such that $g: \Gamma' \to  \Gamma'$ is a topological representative for $\psi$ and $\Delta_0$ is a noncontractible $g$-invariant proper subgraph corresponding to $A$.

Recall from Section \ref{cvf}, the Stallings subgroup graph $S_i = S(\psi^i(F))$ with respect to $\Gamma'$ along with the marking given by folding $g^i: \Gamma' \to \Gamma'$ determine the $i$-th iterate of $[\Gamma'] \in CV(F)$ under the $\psi$-action, i.e., $[\Gamma'] \cdot [\psi]^i = [S_i]$. Since $[S_i] \to [\Gamma]$ by Lemma~\ref{sink} and the spine of outer space is locally finite, we have that the sequence $\sigma(S_i) ~(i \ge 1)$ in $SCV(F)$ is eventually periodic. So for some fixed $i, j \ge 1$, we have $\sigma(S_i) = \sigma(S_{i+j})$ and, by Lemma \ref{fix}, $\psi^j$ can be represented by an immersion on $S_i$.

Let $h_i:\Gamma' \to S_i'$ be the composition of folds given by the construction of $S_i$. Since $\left.g\right|_{\Delta_0}$ is an immersion, the corresponding restriction $\left.h_i\right|_{\Delta_0}$ is either a homeomorphism or an identification of vertices of $\Delta_0$ possibly followed by folds. 
Set $\Delta_i = h_i(\Delta_0)$ and let $g_i^j: S_i \to S_i$ be the induced map representing 
$\psi^j$. Then $\Delta_i$ is a noncontractible $g_i$-invariant subgraph and $\left.g_i\right|_{\Delta_i}$ is an immersion. By the previous paragraph, $g_i^j$ is homotopic to an immersion 
$\gamma: S_i \to S_i$. The homotopy will preserve the invariance of $\Delta_i$ so that $\Delta_i$ is a $\gamma$-invariant subgraph. As $\sigma(S_i)$ is fixed by $\psi^j$, the sequence $S_{i+jm}$ is constructed by pulling back the metric of $S_i$ via $\gamma^m$.

Iteratively pulling back the metric via $\gamma$  and normalizing the metric has the effect of collapsing $\gamma$-invariant forests so that the induced map is a local homothety. By uniqueness of the limit $[S_{i+jm}] \to [\Gamma]$, the induced map must be $f^{nj}$ and $\Delta_\infty$, the image of $\Delta_i$ under the collapse map, is an $f^{nj}$-invariant subgraph.
But $f^{nj}$ is a clean map, so $\Delta_\infty = \Gamma$. 

Let $h_\infty: \Delta_0 \to \Delta_\infty = \Gamma$ be the induced map. By construction, $ f^{nj} \circ h_\infty = \left.h_\infty \circ g^j \right|_{\Delta_0}$ and, as $h_\infty$ is an identification of some vertices possibly followed by a folding and/or a collapse of a forest, the Whitehead graphs of $f^{nj}$ are determined by where $g^j$ maps the edges of $\Delta_0$. So $f^{nj}$ will have disconnected Whitehead graphs or Whitehead graphs with cut vertices (depending on folds in $h_\infty$) at the identified vertices -- a contradiction.
\end{proof}

\begin{eg}\label{countereg} Let $F = F(a,b,c)$ and $\phi:F \to F$ is given by $(a,b,c) \mapsto (aba, c^2, cabac)$. The obvious topological representative on the standard marked rose is a clean map. Furthermore, the Stallings subgroup graphs $S_i = S(\phi^i(F))$, for $i \ge 1$, all determine the same marked graph (modulo metric), i.e., the same vertex in the spine. In particular, we can verify that $\phi$ is induced by a clean immersion on $S_1$. However, $\phi$ is reducible; in fact, $\phi(F) \le \langle aba, c \rangle = A$ and the latter is proper free factor of $F$.

Furthermore, the restriction $\phi_A$ is represented by a clean immersion on a rose whose Whitehead graph has a cut vertex. However, by Theorem~\ref{hypthm} below, the {\it mapping torus} of $\phi_A$ is word-hyperbolic and, as $A$ has rank 2, the endomorphism $\phi_A$ is fully irreducible. Thus, the lack of cut vertices in Theorem~\ref{part3} is not a necessary condition.
\end{eg}

It would be interesting to find an algorithmic characterization of fully irreducible nonsurjective endomorphisms.

\section{Subgroups Invariant Under Irreducible Endomorphisms}\label{iwip}

In this section, we generalize a result by Bestvina-Feighn-Handel \cite[Proposition~2.4]{BFH97}  and I.~Kapovich~\cite[Proposition~4.2]{Kap14} that characterizes the finitely generated subgroups of $F$ that support a {\em leaf of the lamination} of an irreducible automorphism.

\begin{defn}\label{leaf} Given an expanding irreducible train track map $f:\Gamma \to \Gamma$, the {\bf (attracting) lamination of $f$}, denoted by $\Lambda(f)$, is defined by iterated neighbourhoods of $f$-periodic non-vertex points. Precisely, suppose $x \in \mathrm{Int}(e)$ for some $e \in E(\Gamma)$ and $k \ge 1$ are such that $f^k(x) = x$. Then a {\bf leaf of the lamination of $f$}, is the isometric immersion $\gamma_x:\mathbb R \to \Gamma$ such that $\gamma_x(0)=x$ and $f^k(\gamma_x(r)) = \gamma_x(\lambda_f^k \cdot r)$ for all $r \in \mathbb R$, unique up to orientation.
\[ \Lambda(f) = \left\{\gamma_x \,:\, x \text{ is an $f$-periodic non-vertex point.} \right\} \]
For any integer $m \ge 1$, $f^m$-periodic points are $f$-periodic and vice-versa. So it follows that $\Lambda(f) = \Lambda(f^m)$ for any $m \ge 1$.

We shall say a nontrivial subgroup $H \le F$ {\bf supports a leaf of } $\Lambda(f)$ if some leaf $\gamma_x \in \Lambda(f)$ has a lift $\hat \gamma_x: \mathbb R \to S(H)$ to the Stallings subgroup graph of $H$.
\end{defn}

We now address the difficulty that arises when generalizing Bestvina-Feighn-Handel and I.~Kapovich's results. Any automorphism $\phi: F \to F$ permutes the finite index subgroups with the same index; so given any finite index subgroup $H'$, there exists $i \ge 1$ such that $\phi^i(H') = H'$. This fact is used to lift a clean map representing $\phi$ to a clean map on the Stallings subgroup graph $S(H')$. However, this fails when dealing with nonsurjective injective endomorphisms, i.e., there is no reason why some $\phi$-iterate of $H'$ must be a subgroup of $H'$. The next lemma gives us a way of getting around this failure. The key observation is to look at backward iteration: when $\phi$ is injective, then pre-images of finite index subgroups are finite index subgroups with the same index or less.

\begin{lem}\label{lemFI} 
Let $\phi : F \to F$ be an injective endomorphism of a free group $F$. If $H' \le F$ is a finite index subgroup, then there exist $j > k \ge 0$ such that $\phi^{-k}(H') = \phi^{-j}(H')$. 

Furthermore, for $K = \phi^{-k}(H')$, there is an induced set bijection $\varphi: F/K \to F/K$ such that the following diagram commutes:
\[ \xymatrix{
F \ar[d]^\pi \ar[r]^{\phi^{j-k}} &F\ar[d]^\pi\\
F/K \ar[r]^\varphi &F/K}\]
\end{lem}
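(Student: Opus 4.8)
The plan is to combine two standard facts: backward iteration of a homomorphism cannot increase the index of a finite-index subgroup, and a finitely generated group has only finitely many subgroups of any bounded index. First I would establish the index bound. For any finite-index $H' \le F$ the preimage $\phi^{-1}(H')$ is again finite index, with $[F:\phi^{-1}(H')] \le [F:H']$: the assignment $g\,\phi^{-1}(H') \mapsto \phi(g)\,H'$ is a well-defined injection $F/\phi^{-1}(H') \hookrightarrow F/H'$, precisely because $g \in \phi^{-1}(H')$ if and only if $\phi(g) \in H'$. (Notably, this particular step uses only that $\phi$ is a homomorphism.) Since $\phi^{-(i+1)}(H') = \phi^{-1}\!\big(\phi^{-i}(H')\big)$, an immediate induction then shows that every $\phi^{-i}(H')$ is finite index with $[F:\phi^{-i}(H')] \le [F:H']$.

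Next I would invoke the finiteness of subgroups of bounded index in the finitely generated group $F$: each subgroup of index $\le [F:H']$ is the stabiliser of a point for a transitive action of $F$ on at most $[F:H']$ points, and there are only finitely many homomorphisms from a finitely generated $F$ into a finite symmetric group. Hence the sequence $\big(\phi^{-i}(H')\big)_{i\ge 0}$ assumes only finitely many values, and the pigeonhole principle produces $j > k \ge 0$ with $\phi^{-k}(H') = \phi^{-j}(H')$. This is the first assertion.

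For the second assertion, set $K = \phi^{-k}(H')$ and $m = j-k$. Rewriting the coincidence as $\phi^{-m}(K) = K$ yields the equivalence $g \in K \iff \phi^m(g) \in K$, and in particular $\phi^m(K) \subseteq K$. I would then define $\varphi \colon F/K \to F/K$ by $\varphi(gK) = \phi^m(g)K$. The containment $\phi^m(K) \subseteq K$ makes $\varphi$ well defined, the equivalence above makes it injective, and since $K$ is finite index an injective self-map of the finite set $F/K$ is automatically a bijection. Commutativity of the square is then immediate, as $\varphi(\pi(g)) = \phi^m(g)K = \pi(\phi^{j-k}(g))$.

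The only point requiring care — and the reason one cannot argue more directly — is that the subgroups $\phi^{-i}(H')$ need not be nested, so there is no monotone containment to exploit. The argument therefore proceeds through the \emph{index} (which is monotone) together with finiteness of subgroups of bounded index, rather than through the subgroups themselves; everything else is formal verification of well-definedness and bijectivity.
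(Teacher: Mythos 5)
Your proof is correct and follows essentially the same route as the paper: bound the index of $\phi^{-i}(H')$ by $[F:H']$, use the finiteness of subgroups of bounded index in the finitely generated group $F$ to pigeonhole two equal preimages, and then observe that $\phi^{-(j-k)}(K)=K$ gives exactly the well-definedness and the injectivity of $\varphi$ on the finite set $F/K$. The only variation is in how the index bound is obtained: the paper passes through the isomorphism $\phi^{k}\colon F\to\phi^{k}(F)$ (so it uses injectivity of $\phi$) to identify $[F:\phi^{-k}(H')]$ with $[\phi^{k}(F):\phi^{k}(F)\cap H']$, whereas your direct coset-space injection $F/\phi^{-1}(H')\hookrightarrow F/H'$ never uses injectivity, so your version of the first assertion is valid for arbitrary endomorphisms.
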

\begin{proof} Set $H_k = \phi^{-k}(H')$ for $k \ge 0$. Then $\phi^k(H_k) = \phi^k(F)\cap H'$. Since $H'$ has finite index in $F$, then $\phi^k(H_k) = \phi^k(F)\cap H'$ has finite index in $\phi^k(F)$. In fact, $[F:H'] \ge [\phi^k(F):\phi^k(H_k)]$. But $\phi^k: F \to \phi^k(F)$ is an isomorphism that maps $H_k \le F$ to $\phi^k(H_k) \le \phi^k(F)$. Thus, $[\phi^k(F):\phi^k(H_k)] = [F:H_k]$ and $H_k$ has finite index in $F$ bounded by $[F:H']$ for all $k \ge 0$. As there are only finitely many subgroups with index bounded by $[F:H']$, there must exists $j > k \ge 0$ such that $H_k = H_j$, i.e., $\phi^{-k}(H') = \phi^{-j}(H')$. 

Let $K = \phi^{-k}(H')$ and $\pi:F \to F/K$ be the (left) coset projection map. The function $\pi \circ \phi^{j-k}$ factors through $\pi$ if and only if $\phi^{j-k}(K) \le K$, or equivalently, $K \le \phi^{k-j}(K)$, and the induced function $\varphi:F/K \to F/K$ is an injection if and only if $\phi^{k-j}(K) \le K$. By construction, $\phi^{k-j}(K)=K$, i.e., both conditions are satisfied, so $\pi \circ \phi^{j-k} = \varphi \circ \pi$ where $\varphi$ is a bijection since it is an injection of the finite set $F/K$ into itself.
\end{proof}

The main result of the section follows:

\begin{prop}\label{supSbgrp} Suppose $\phi : F \to F$ is injective and represented by a clean map. If $H \le F$ is a nontrivial finitely generated subgroup that supports a leaf of $\Lambda(f)$, then \[ [\phi^k(F) : \phi^k(F) \cap H] < \infty \quad \text{for some } k \ge 0. \]
\end{prop}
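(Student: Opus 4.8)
The plan is to reduce the general finitely generated case to the finite-index case, where the machinery of Lemma~\ref{lemFI} can be applied. First I would let $f:\Gamma \to \Gamma$ be the clean representative of $\phi$ and suppose $H$ supports a leaf $\gamma_x$ of $\Lambda(f)$, so there is a lift $\hat\gamma_x:\mathbb R \to S(H)$. The heart of the argument is to show that the image of this lift must eventually fill up all of $S(H)$ in a suitable sense, forcing $S(H)$ to cover a large portion of $\Gamma$. Concretely, the leaf $\gamma_x$ is the increasing union of the iterated images $f^{nk}(I)$ of a small neighbourhood $I$ of the periodic point $x$, and because $f$ is clean (primitive transition matrix), for large $n$ the path $f^{nk}(I)$ traverses \emph{every} edge of $\Gamma$. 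Since a lift $\hat\gamma_x$ exists in $S(H)$, the immersion $v:S(H)\to\Gamma$ must therefore be surjective on edges; this is the key qualitative consequence of cleanness that I would extract first.

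Next I would argue that surjectivity of $v$ on edges, combined with the recurrence of the leaf, forces $S(H)\to\Gamma$ to be a covering map of \emph{finite degree} onto $\Gamma$. The idea is that a leaf of a clean lamination is \emph{quasiperiodic}: there is a bounded constant $R$ (depending only on $f$) so that every legal subpath of length $R$ in $\Gamma$ appears as a subpath of the leaf. Lifting this to $S(H)$, every vertex of $S(H)$ sits inside a lift of such a recurring pattern, which pins down the local structure of the immersion and prevents $v$ from failing to be a local homeomorphism at any vertex; that is, $v$ has no missing directions. An immersion of finite graphs that is surjective and full (a local bijection on stars) is a finite cover, so $[\,\pi_1(\Gamma):v_*\pi_1(S(H))\,]=\deg(v)<\infty$. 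Interpreting this back in $F$ via the marking, this says that $H$ is a finite-index subgroup of $F$ \emph{after} identifying $\pi_1(\Gamma)$ with $F$ through the train-track marking.

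The final step is to connect this finite-index conclusion to the image groups $\phi^k(F)$ and to invoke Lemma~\ref{lemFI}. Having shown $H' := H$ is (conjugate into) a finite-index subgroup relative to $\Gamma$, I would apply Lemma~\ref{lemFI} to produce $j>k\ge 0$ with $\phi^{-k}(H')=\phi^{-j}(H')=:K$ of finite index, and the resulting bijection $\varphi:F/K\to F/K$. The point of the backward-iteration lemma is precisely that injective endomorphisms behave well on preimages of finite-index subgroups even though they need not preserve $H$ itself. Unwinding the index relations, $\phi^k(F)\cap H = \phi^k(\phi^{-k}(H)) = \phi^k(K)$ up to the relevant intersection, and finite index of $K$ in $F$ transports (via the isomorphism $\phi^k:F\to\phi^k(F)$) to finite index of $\phi^k(F)\cap H$ in $\phi^k(F)$, which is exactly the claim $[\phi^k(F):\phi^k(F)\cap H]<\infty$.

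The main obstacle I anticipate is the second step: rigorously upgrading ``the leaf fills $\Gamma$'' to ``$v$ is a \emph{finite cover}.'' Surjectivity on edges is relatively direct from primitivity, but ruling out that $S(H)$ has a vertex where $v$ fails to be locally surjective onto the star in $\Gamma$ requires the quasiperiodicity of clean laminations together with bounded cancellation to guarantee that all legal turns of $\Gamma$ are realized along the lifted leaf in $S(H)$. Controlling how the finitely-generated hypothesis on $H$ interacts with the (a priori infinite) leaf, and ensuring the degree is genuinely finite rather than the leaf merely being carried by an infinite-index subgroup, is where the real work lies; I expect this to rely essentially on the connectedness of the Whitehead graphs to see that no legal direction is omitted.
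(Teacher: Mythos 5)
There is a genuine gap, and it sits exactly where you flagged the ``main obstacle'': the claim that the lifted leaf forces $v\colon S(H)\to\Gamma$ to be a finite cover, i.e.\ that $[F:H]<\infty$. This is false, and a counterexample is built into the statement: take $H=\phi(F)$ for $\phi$ nonsurjective. Then $H$ is finitely generated, nontrivial, and supports a leaf of $\Lambda(f)$ (every $f^{nk}$-iterated neighbourhood of the periodic point lies in $f(\Gamma)$, hence lifts to $S(\phi(F))$), yet $[F:\phi(F)]=\infty$ since $\phi(F)$ is a proper subgroup of rank equal to $\mathrm{rank}(F)$. Quasiperiodicity of the leaf downstairs does not transfer to local surjectivity of $v$ at vertices of $S(H)$: the lifted leaf only witnesses the turns it actually takes in $S(H)$, and nothing forces it to take every direction at every vertex. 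This is precisely why the proposition's conclusion is the weaker statement that $\phi^k(F)\cap H$ has finite index in $\phi^k(F)$ for \emph{some} $k$, rather than $H$ having finite index in $F$; if your second step were correct, the conclusion would hold trivially with $k=0$.

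The paper's route avoids this by never trying to show $H$ itself has finite index. It completes $S(H)$ to a finite cover $S(H')\to\Gamma$ with $H\le H'$ and $[F:H']<\infty$ (Stallings completion), applies Lemma~\ref{lemFI} to $H'$ to get $K=\phi^{-k}(H')=\phi^{-j}(H')$, and then works on the finite cover $p\colon S(\phi^k(K))\to S(\phi^k(F))$. There the clean map $g$ on $S(\phi^k(F))$ furnished by Lemma~\ref{rel2} lifts to a map $\hat g$ permuting fibres, and a Bestvina--Feighn--Handel-style argument shows $\hat g$ is itself clean. Only then is your ``the leaf fills everything'' idea applied --- but at the level of $S(\phi^k(K))$ with its clean lifted dynamics, where iterating a neighbourhood of a lifted periodic point under $\hat g$ genuinely covers the whole graph; this shows the subgraph $\Delta_H=S(\phi^k(F)\cap H)$ is all of $S(\phi^k(K))$, hence a finite cover of $S(\phi^k(F))$. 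Your first step (the leaf crosses every edge of $\Gamma$ by primitivity) is fine, and your instinct to reduce to the finite-index setting via Lemma~\ref{lemFI} is the right one, but the reduction must go through the completed cover $H'$ and the pullback graphs over $S(\phi^k(F))$, not through $H$ directly.
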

\begin{proof}
As $H$ is nontrivial and finitely generated, $S(H)$ is a nontrivial finite graph and we can add edges to $S(H)$ if necessary to extend the immersion $S(H) \to \Gamma$ to a finite cover $S(H') \to \Gamma$ corresponding to $H \le H' \le F$. Thus $[F:H'] < \infty$ and we can apply Lemma \ref{lemFI} to get $j > k \ge 0$ such that $\phi^{-k}(H') = \phi^{-j}(H')$. Set $K = \phi^{-k}(H')$. Then $\phi^k(K) = \phi^k(F) \cap H'$ has finite index in $\phi^k(F)$. Recalling the diagram preceding Lemma~\ref{rel2}, the graph $S(\phi^k(F)) = S_k$ and the cover corresponding to $\phi^k(K) \le \phi^k(F)$ lie in the following commutative diagram:
\[\xymatrix{
 S(\phi^k(K)) \ar[d]^p  & S(\phi^k(K)) \ar[d]^p \\
S(\phi^k(F)) \ar[d]^{v_k} \ar@{-->}[r]^{g} &  S(\phi^k(F)) \ar[d]^{v_k} \\
\Gamma \ar[ru]_{h_k } \ar[r]_{f^k} & \Gamma
}\]
where $g: S(\phi^k(F)) \to S(\phi^k(F))$ is clean by Lemma \ref{rel2} and $p: S(\phi^k(K)) \to S(\phi^k(F))$ is a finite cover.
The map $h_k$ maps $f^k$-periodic points to $g$-periodic points and $v_k$ maps $g$-periodic points to $f^k$-periodic points. This allows us to identify $\Lambda(f)$ with $\Lambda(g)$.

Let $\Delta_H = S(\phi^k(F) \cap H)$. This is a nontrivial graph since $\phi(H) \le H$. Note that $ S(\phi^k(K))$ is the pullback of $ S(\phi^k(F)) \to \Gamma$ and $S(H') \to \Gamma$ and $\Delta_H$ is the pullback of $ S(\phi^k(F)) \to \Gamma$ and $S(H) \to \Gamma$. As $S(H)$ is a subgraph of  $S(H')$, we have that $\Delta_H$ is a subgraph of $ S(\phi^k(K))$. The graphs fit in this commutative diagram:
\[\xymatrix{
\Delta_H \ar[d] \ar@{^{(}->}[r] & S(\phi^k(K)) \ar[d] \ar[r]^p &  S(\phi^k(F)) \ar[d]^{v_k} \\
S(H) \ar@{^{(}->}[r] & S(H') \ar[r] &\Gamma
}\]

Since $ S(\phi^k(F))$ and $S(H)$ support a leaf of $\Lambda(f)$, it follows that their pullback $\Delta_H$ supports a leaf of $\Lambda(f)$. Following the identification $\Lambda(f) \cong \Lambda(g)$, we have that $\Delta_H$ supports a leaf of $\Lambda(g)$.
Lemma~\ref{lemFI} says $\phi^{j-k}$ induces a permutation of $F/K \cong \phi^k(F)/\phi^k(K)$. Therefore, the map $g^{j-k}:S(\phi^k(F))\to S(\phi^k(F))$ lifts to a map $\hat g: S(\phi^k(K))\to S(\phi^k(K))$ with the property: if $g^{j-k}(x) = x$, then $\hat g$ permutes the elements of $p^{-1}(x)$.

The rest of the argument follows that of Bestvina-Feighn-Handel \cite[Lemma~2.1]{BFH97}. We include the details for completeness; see also \cite[Proposition~4.2]{Kap14}.

\begin{claim}The map $\hat g: S(\phi^k(K)) \to  S(\phi^k(K))$ is clean.
\end{claim}
\begin{proof}Let $\{a', b'\}$ be a turn at a vertex $v \in  S(\phi^k(K))$ such that its projection under $p$, $\{a, b\}$, is an edge of the Whitehead graph of $g^{j-k}$ at $p(v)$. Since $g^{j-k}$ is a clean map, we can replace it with an iterate and assume $g^{j-k}(a) = \ldots ab \ldots$. So $a$ contains a $g^{j-k}$-fixed point $x$ and, consequently, $\hat g$ permutes the lifts $p^{-1}(x)$. Replace $g^{j-k}$ with an iterate if necessary and assume $\hat g$ fixes $p^{-1}(x)$ and let $x' \in p^{-1}(x)$ be the lift of $x$ in $a'$. Then $x'$ is a $\hat g$-fixed point and $\hat g(a') = \ldots a'b' \ldots$. An identical argument shows that $\hat g(b') = \ldots a'b' \ldots$ after passing to a power if necessary. Thus the Whitehead graph at $v$ with respect to $\hat g$ is isomorphic to the Whitehead graph at $p(v)$ with respect to $g^{j-k}$. So the Whitehead graphs of $ S(\phi^k(K))$ with respect to $\hat g$ are connected and $\hat g$ is a train track as the turns in $\hat g(e')$ are lifts of the turns in $g^{j-k}(p(e'))$ for any edge $e$ in $ S(\phi^k(K))$. It remains to show that $g$ is irreducible.

Since the Whitehead graphs of $ S(\phi^k(K))$ are connected, for any edges $a, b$ in $ S(\phi^k(K))$, there is a sequences of turns $\{\epsilon_1, \epsilon_2\}, \{\epsilon_3, \epsilon_4\}, \ldots, \{\epsilon_{2l-1} \epsilon_{2l}\}$ such that each turn is an edge of the corresponding Whitehead graph, $\epsilon_{2m}, \epsilon_{2m+1}$ are half-edges of the same edge for $1 \le m < l$, and $\epsilon_1, \epsilon_{2l}$ are half-edges of $a, b$ respectively. Let $a=e_0, e_1, \ldots, e_{l-1}, e_l = b$ the corresponding sequence of edges. By the previous paragraph, some iterate of $\hat g$ maps $e_m$ to $e_{m+1}$ for $0 \le m < l$.  By induction, some iterate of $\hat g$ maps $a$ to $b$. As $a$ and $b$ were arbitrary, $\hat g$ is an irreducible train track.
\end{proof}
\begin{claim}The subgraph $\Delta_H \subset  S(\phi^k(K))$ is not proper, i.e., $\Delta_H =  S(\phi^k(K))$.
\end{claim}
\begin{proof}Recall that a leaf of $\Lambda(g) = \Lambda(g^{j-k})$ is constructed by iterating neighbourhoods of some $g^{j-k}$-periodic non-vertex point. Suppose $x \in \mathrm{Int}(e)$ for some edge $e$ in $S(\phi^k(F))$ and $l \ge 1$ are such that $g^{(j-k)l}(x) = x$ and $g^{(j-k)l}$ lifts to a map $\hat g^l: S(\phi^k(K)) \to  S(\phi^k(K))$ that fixes $p^{-1}(x)$ pointwise. Then the lift of $\gamma_x \in \Lambda(g^{j-k})$ to $\Delta_H$ can be constructed by using $\hat g^l$ to iterate a neighbourhood of some $x' \in p^{-1}(x) \cap \Delta_H$. Let $e'$ be the edge containing $x'$. Since $\hat g$ is clean, $\hat g^m(e')$ surjects onto the whole graph $S(\phi^k(K))$ for some $m \ge 1$ and any iterated neighbourhood of $x'$ shall eventially cover $ S(\phi^k(K))$. Therefore, $S(\phi^k(K)) \subset \Delta_H$.
\end{proof}
So the natural map $\Delta_H \to S(\phi^k(F))$ is a finite cover and $[\phi^k(F) : \phi^k(F) \cap H] < \infty$.
\end{proof}

In this paper, we will need the conclusion of the previous proposition to hold for {\it invariant} subgroups:

\begin{prop}\label{invSbgrp} Suppose $\phi : F \to F$ is injective and represented by a clean map. If $H \le F$ is a finitely generated subgroup such that $\phi(H) \le H$ and $H$ contains a $\phi$-expanding conjugacy class, then \[ [\phi^k(F) : \phi^k(F) \cap H] < \infty \quad \text{for some } k \ge 0. \]
\end{prop}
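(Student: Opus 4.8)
The plan is to reduce Proposition~\ref{invSbgrp} to the already-established Proposition~\ref{supSbgrp} by showing that the invariance hypothesis $\phi(H) \le H$, together with the presence of a $\phi$-expanding conjugacy class in $H$, forces $H$ to support a leaf of $\Lambda(f)$. Once that is done, Proposition~\ref{supSbgrp} immediately yields the finite-index conclusion. The whole difficulty is therefore concentrated in producing a leaf of the lamination inside the Stallings subgroup graph $S(H)$.

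First I would fix the clean train track representative $f:\Gamma \to \Gamma$ for $\phi$ and consider the $\phi$-expanding conjugacy class $[\rho]$ contained in $H$, represented by an immersed loop $\rho$ in $\Gamma$. Since $[\rho]$ is $\phi$-expanding and $f$ is an expanding irreducible train track, the lengths $l([f^n(\rho)])$ grow without bound; after passing to $[f^n(\rho)]$ for large $n$, the loop contains legal subpaths longer than the critical constant $\frac{2C}{\lambda_f-1}$ of Definition~\ref{crit}. The key tool is Lemma~\ref{critLem}: a sufficiently long legal subpath $s$ has the persistence property that $f^k(s)$ survives as a subpath of $[f^k(\cdots)]$ for all $k$. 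Because $\phi(H)\le H$, every iterate $[f^n(\rho)]$ is again a loop representing a conjugacy class in $H$, hence lifts to a loop in $S(H)$; thus the persistent legal subpaths, and the longer and longer legal arcs they generate under iteration, all lift to $S(H)$. Taking a limit of these arcs through a periodic non-vertex point of $f$ produces a bi-infinite legal line, i.e., a leaf $\gamma_x \in \Lambda(f)$, whose arbitrarily long subpaths all lift to the finite graph $S(H)$; by a standard compactness/pigeonhole argument on the finitely many edges of $S(H)$, the entire leaf lifts, so $H$ supports a leaf of $\Lambda(f)$.

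With $H$ supporting a leaf of $\Lambda(f)$, I would invoke Proposition~\ref{supSbgrp} verbatim. Note that $H$ is nontrivial (it contains the nontrivial class $[\rho]$) and finitely generated by hypothesis, so all the standing assumptions of that proposition are met, and we conclude $[\phi^k(F):\phi^k(F)\cap H]<\infty$ for some $k\ge 0$.

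The main obstacle I expect is the limiting step that upgrades ``arbitrarily long legal subpaths of the leaf lift to $S(H)$'' into ``the whole leaf lifts to $S(H)$.'' Producing the leaf itself is routine — legal arcs of growing length through a periodic point converge to $\gamma_x$ by the linear expansion structure of $f$ — but one must be careful that the lifts to $S(H)$ can be chosen coherently rather than as a sequence of unrelated finite lifts. Here the finiteness of $S(H)$ is essential: since $S(H)$ has only finitely many edges and directed turns, there are only finitely many combinatorial types of lifted arc of each length, so one can extract a nested coherent family of lifts whose union is an immersed bi-infinite line in $S(H)$ covering $\gamma_x$. Care is also needed to ensure the persistent subpath $s$ from Lemma~\ref{critLem} can be taken through a genuine periodic non-vertex point of $f$; this is arranged by working with an $f$-periodic legal arc, which exists because the expanding irreducible train track $f$ has periodic non-vertex points in each edge after passing to a suitable iterate.
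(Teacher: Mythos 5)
Your proposal is correct and follows essentially the same route as the paper: use the invariance $\phi(H)\le H$ to lift all iterates $[f^k(\rho)]$ to $S(H)$, apply Lemma~\ref{critLem} to obtain a persistent legal subpath containing an $f$-periodic point, extract a coherent family of lifts of longer and longer leaf segments via finiteness of $S(H)$ (the paper pins the lifts down by the finitely many preimages of the periodic point together with uniqueness of lifts), and conclude by Proposition~\ref{supSbgrp}. The one step you flagged as the main obstacle is handled in the paper exactly as you suggest.
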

\begin{proof}
Suppose $\phi(H) \le H$ for some finitely generated group $H \le F$. Let $f: \Gamma \to \Gamma$ be the given clean map and $\rho$ be the $f$-expanding immersed loop in $\Gamma$ that lifts to a loop in $S(H)$. The invariance $\phi(H) \le H$ implies the loops $[f^{k}(\rho)]$ lift to loops in $S(H)$ for all $k \ge 1$. As $\rho$ is $f$-expanding and $f$ is a clean map, length of $[f^{k}(\rho)]$ grows arbitrarily with $k$ while the number of $f$-illegal turns in $[f^{k}(\rho)]$ remains bounded. Thus, for some $k \ge 1$, the loop $[f^{k}(\rho)]$ will contain an $f$-legal subpath longer than the critical constant. By Lemma~\ref{critLem}, there is a nontrivial subpath $s$ of $\rho$ such that $f^{k}(s)$ is a subpath of $[f^{k}(\rho)]$ for all $k \ge 1$. As $f$ is clean, some $f$-iterate of $s$ maps onto $\Gamma$.

Let $x$ be an $f$-periodic non-vertex point of $\rho$ and $\gamma_x: \mathbb R \to \Gamma$ be the corresponding leaf of $\Lambda(f)$. For all real $r > 0$, the path $\gamma_x([-r,r])$ is a subpath of $[f^{k}(\rho)]$ for some $k \ge 1$ since $x$ is contained in some $f$-iterate of $s$. Thus $\left.\gamma_x\right|_{[-r, r]}$ has a lift $\gamma_{x,r} : [-r, r] \to S(H)$. There are only finitely many preimages of $x$ in $S(H)$, so after passing to an unbounded increasing subsequence $r_m$, we can assume the sequence $(\gamma_{x,r_m}(0))_{m=1}^\infty$ is constant. By uniqueness of lifts, $\gamma_{x,r_{m+1}}$ is an extension of $\gamma_{x,r_{m}}$ for $m \ge 1$. The limit immersion $\gamma_{x, \infty}: \mathbb R \to S(H)$ is a lift of $\gamma_x$ and so $H$ supports a leaf of $\Lambda(f)$. The conclusion follows from Proposition~\ref{supSbgrp}.
\end{proof}

We shall now give necessary and sufficient conditions for an endomorphism to be fully irreducible. The reader only interested in our hyperbolicity result can skip to the next section.

\begin{thm}\label{propKap} Let $\phi:F \to F$ be an injective endomorphism. Then $\phi$ is fully irreducible if and only if $\phi:F \to F$ has no periodic cyclic free factor, is represented by a clean map,  and its image $\phi(F)$ is not contained in a proper free factor.
\end{thm}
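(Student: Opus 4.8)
The forward direction I would dispatch directly from the definitions. Assume $\phi$ is fully irreducible. Since $\mathrm{rank}(F)\ge 2$, a cyclic free factor is a proper free factor, so a periodic cyclic free factor would be a periodic proper free factor, which is forbidden; hence the first condition holds. Full irreducibility gives irreducibility, and $\phi$ must have infinite order (a finite-order $\phi$ has an iterate equal to an inner automorphism, making every proper free factor periodic), so Corollary~\ref{clean2} supplies a clean representative. Finally, if $\phi(F)\le P$ for a proper free factor $P$, then $\phi(P)\le\phi(F)\le P$, exhibiting $P$ as an invariant proper free factor, a contradiction; so $\phi(F)$ is not contained in a proper free factor.

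For the converse I would argue by contradiction: assume the three conditions but suppose $\phi$ is not fully irreducible, so $\phi$ admits a periodic proper free factor. Among all such, choose one, $A$, of \emph{minimal rank}; thus $\phi^n(A)\le gAg^{-1}$ for some $n\ge1$, $g\in F$. Replacing $\phi$ by $\psi:=i_{g^{-1}}\circ\phi^n$ — which is injective, satisfies $\psi(A)\le A$, and is represented by the clean map $f^n$ since $[\psi]=[\phi^n]$ — reduces to the invariant case, and note $\psi^k(F)$ is conjugate to $\phi^{nk}(F)$. If $\mathrm{rank}(A)=1$ then $A$ is a periodic cyclic free factor, contradicting the first hypothesis. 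So I may assume $\mathrm{rank}(A)\ge2$ and split on whether $A$ contains a $\psi$-expanding conjugacy class.

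In the first case ($A$ contains a $\psi$-expanding class), Proposition~\ref{invSbgrp} applies to the invariant subgroup $A$ and gives $[\psi^k(F):\psi^k(F)\cap A]<\infty$ for some $k$ (if $k=0$ then $F\le A$, already absurd). By the Kurosh subgroup theorem, the intersection of the free factor $A$ with $\psi^k(F)$ is a free factor of $\psi^k(F)$, and a finite-index free factor is the whole group, so $\psi^k(F)\le A$; hence $\phi^{nk}(F)$ lies in a proper free factor. To contradict the third hypothesis I would prove the auxiliary statement that, for injective $\phi$, if some $\phi^m(F)$ with $m\ge1$ lies in a proper free factor then so does $\phi(F)$. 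This follows by induction on the contrapositive: since $\phi\colon F\to\phi(F)$ is an isomorphism it carries ``not contained in a proper free factor'' from $F$ to $\phi(F)$, so if $\phi^m(F)$ lies in no proper free factor then neither does $\phi^{m+1}(F)=\phi(\phi^m(F))$, using that $P\cap\phi(F)$ is (again by Kurosh) a free factor of $\phi(F)$ containing it and hence equals $\phi(F)$, forcing $\phi(F)\le P$ against the base case. Applying this to $m=nk$ contradicts the hypothesis that $\phi(F)$ lies in no proper free factor.

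In the second case ($A$ contains no $\psi$-expanding conjugacy class), I claim $\psi|_A\colon A\to A$ cannot be fully irreducible: a fully irreducible $\psi|_A$ is irreducible, so by Theorem~\ref{tt} it has an irreducible train track which is expanding unless $\psi|_A$ has finite order; an expanding irreducible train track has a legal loop and hence an expanding class in $A$ (indeed, as in the proof of Theorem~\ref{part2}, an irreducible non-surjective $\psi|_A$ is atoroidal, so every nontrivial class expands), contradicting the case assumption, while a finite-order automorphism of a rank-$\ge2$ free group has every proper free factor periodic and so is not fully irreducible. Thus $\psi|_A$ has a periodic proper free factor $A_1\lneq A$; since $A$ is a free factor of $F$, $A_1$ is a $\phi$-periodic proper free factor of $F$ of strictly smaller rank, contradicting the minimality of $A$. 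With every branch contradictory, $\phi$ is fully irreducible. The main obstacle is the first case — converting the finite-index conclusion of Proposition~\ref{invSbgrp} into a contradiction with the condition on $\phi(F)$ — which rests on the two purely algebraic free-factor facts (Kurosh intersection and iteration-insensitivity of image containment), the dynamical content being entirely absorbed into Proposition~\ref{invSbgrp} and Theorem~\ref{tt}.
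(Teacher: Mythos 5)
Your argument is structurally the paper's: choose a minimal-rank invariant free factor $A$, produce a $\phi$-expanding conjugacy class in it, apply Proposition~\ref{invSbgrp}, use that a finite-index free factor is the whole group, and finish with the induction showing no iterated image $\phi^m(F)$ lies in a proper free factor. The one genuine gap is your disposal of the rank-one case. You write that if the minimal-rank invariant free factor $A$ is cyclic then $A$ ``is a periodic cyclic free factor, contradicting the first hypothesis.'' This conflates \emph{invariant} ($\phi^n(A)\le gAg^{-1}$) with \emph{periodic}. In the theorem, ``periodic cyclic free factor'' means a cyclic free factor $\langle a\rangle$ whose generator's conjugacy class is $\phi$-periodic, i.e.\ $\phi^n(a)$ is conjugate to $a^{\pm1}$; an invariant cyclic free factor with $\phi^n(a)$ conjugate to $a^{d}$, $|d|\ge2$, is \emph{not} excluded by the hypothesis and must be dealt with by the argument. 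That this is the intended reading is forced by Corollary~\ref{myequiv}, where ``no periodic cyclic free factor'' is deduced from atoroidality alone, and the case is not vacuous: compare $\psi(a,b)=(aba,bab)$ with $\psi(ab)=(ab)^3$, where $\langle ab\rangle$ is an invariant but non-periodic cyclic free factor.

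The repair is short and is exactly what the paper does: if $A=\langle a\rangle$ is invariant but not periodic, then $|d|\ge 2$ and $[a]$ is $\phi$-expanding, so Proposition~\ref{invSbgrp} applies to this $A$ just as in your first case, and $\psi^k(F)\le A$ is absurd (an injective image of a rank-$\ge 2$ free group cannot be cyclic; alternatively it again violates the third hypothesis). With that patch the rest stands: your first case is the paper's main argument verbatim (including the Kurosh step and the induction on images), and your second case is just the contrapositive packaging of the paper's observation that minimality forces $\psi|_A$ to be fully irreducible, hence to contain an expanding class.
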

\begin{proof} The forward direction follows from the definition of fully irreducible and Corollary~\ref{clean2}. We now prove the reverse direction.
Let $A \le F$ be a $\phi^n$-invariant free factor for some $n \ge 1$ and assume $A$ has minimal rank (for all $n$). Then $i_g \circ \phi^n(A) \le A$ for some inner automorphism $i_g$. If $A$ is cyclic then it is generated by an element in a $\phi$-expanding conjugacy class since it cannot be periodic. Otherwise, the minimality assumption implies $\left.i_g \circ \phi^n\right|_A$ is fully irreducible. In both cases, $A$ contains a $\phi$-expanding conjugacy class. As $\phi$ is represented by a clean map, so is $i_g \circ \phi^n$. By Proposition~\ref{invSbgrp}, there exists $k \ge 0$ such that $ (i_g\circ\phi^n)^k(F) \cap A$ has finite index in $(i_g \circ \phi^n)^k(F)$. It remains to show $A = F$, hence, $\phi$ is fully irreducible.

Since $A$ is a free factor of $F$,  the intersection $ (i_g\circ\phi^n)^k(F) \cap A$ is a free factor of $(i_g \circ \phi^n)^k(F)$. The only finite index free factor of a finitely generated free group is the free group itself. Thus $ (i_g \circ \phi^n)^k(F) \cap A = (i_g \circ \phi^n)^k(F)$ and $(i_g \circ \phi^n)^k(F)\le A$. 

Recall that $\phi(F)$ is not contained in a proper free factor. Suppose $\phi^{l-1}(F)$ is not contained in a proper free factor of $F$ for some $l \ge 2$. Let $K \le F$ be a free factor such that $\phi^l(F) \le K$. Then $\phi(\phi^{l-1}(F))=\phi^l(F) \le K \cap \phi^{l-1}(F)$ and the intersection is a free factor of $\phi^{l-1}(F)$. As $\phi^{l-1}$ is injective, we have $\phi|_{\phi^{l-1}(F)}$ is conjugate to $\phi$. Therefore, $\phi^l(F)$ is not contained in a proper free factor of $\phi^{l-1}(F)$ and $K \cap \phi^{l-1}(F) = \phi^{l-1}(F)$. Therefore $\phi^{l-1}(F)  \le K$. The induction hypothesis is that $\phi^{l-1}(F)$ is not contained in a proper free factor of $F$, hence $K= F$. So $\phi^l(F)$ is not contained in a proper free factor of $F$. 

By induction, $\phi^n(F)$ is not contained in a proper free factor of $F$. We also have $i_g \circ \phi^n(F)$ is not contained in a proper free factor of $F$ since $i_g$ is an automorphism. By the same induction argument, $(i_g \circ \phi^n)^k(F)$ is not contained in a proper free factor of $F$. Therefore, $(i_g \circ \phi^n)^k(F) \le A$ implies $A=F$ and $\phi$ is fully irreducible.
\end{proof}

Example~\ref{countereg} shows that the condition on the image of the endomorphism is not redundant. This proposition extends the characterization of fully irreducible automorphisms due to I.~Kapovich \cite[Theorem~1.2]{Kap19}, which in turn was motivated by Catherine Pfaff's criterion for irreducibility \cite[Theorem~4.1]{Pfaff}.

A result due to Dowdall-Kapovich-Leininger is that atoroidal irreducible automorphisms are fully irreducible \cite[Corollary~B.4]{DKL15}. As a corollary of the characterization, we get the equivalence for irreducible nonsurjective endomorphisms.

\begin{cor}\label{myequiv} If $\phi:F \to F$ is irreducible and nonsurjective, then it is fully irreducible.
\end{cor}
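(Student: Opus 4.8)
The plan is to deduce Corollary~\ref{myequiv} directly from Theorem~\ref{propKap} by verifying its three hypotheses for an irreducible nonsurjective endomorphism $\phi$. Since full irreducibility is characterized by: (i) no periodic cyclic free factor, (ii) representability by a clean map, and (iii) $\phi(F)$ not contained in a proper free factor, I would check each in turn.

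First I would observe that $\phi$ is injective by Lemma~\ref{inj}, and that since $\phi$ is nonsurjective it must have infinite order (a finite-order injective endomorphism of a free group is an automorphism). Thus by Corollary~\ref{clean2} the endomorphism $\phi$ admits a clean representative, giving hypothesis (ii). For hypothesis (i), note that Theorem~\ref{part2} represents $\phi$ by an \emph{expanding} clean immersion $f:\Gamma\to\Gamma$ with $\lambda_\phi>1$; the proof of Theorem~\ref{part2} in fact shows $\phi$ is atoroidal (the loops in $S_i$ grow exponentially, so the action on $T_\phi$ is free). An atoroidal endomorphism has no periodic conjugacy class, hence certainly no periodic cyclic free factor, giving (i).

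The only hypothesis requiring genuine argument is (iii): that $\phi(F)$ is not contained in a proper free factor. Here I would argue by contradiction. Suppose $\phi(F)\le A$ for some proper free factor $A\le F$. The natural move is to use irreducibility of $\phi$ against such a containment. Writing $F = A * B$ with $B$ nontrivial, the fact that $\phi(F)\le A$ means $\phi(A)\le A$ and $\phi(B)\le A$, so $A$ is a $\phi$-invariant proper free factor; taking the free factorization $A * B$ with the single factor $A$ (and $B$ as the complementary nontrivial piece, i.e. $k=1$) directly exhibits $\phi$ as reducible in the sense of the definition, contradicting irreducibility. I expect this to be the main point to get exactly right: one must confirm that ``$\phi(F)$ contained in a proper free factor'' really does fall under the reducibility definition, which it does since a proper free factor $A$ with $\phi(A)\le A$ is precisely the $k=1$ case (with $g_1 = 1$ and $B$ the nontrivial complement).

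With all three conditions of Theorem~\ref{propKap} verified, the theorem concludes that $\phi$ is fully irreducible. The proof is therefore short, with the bulk of the work already packaged into Theorem~\ref{part2} (atoroidality and the clean representative) and Corollary~\ref{clean2}; the essential new observation is simply translating the irreducibility hypothesis into the statement that $\phi(F)$ avoids proper free factors, after which the characterization does the rest.
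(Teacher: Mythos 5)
Your proposal is correct and follows essentially the same route as the paper: verify the three hypotheses of Theorem~\ref{propKap} via Corollary~\ref{clean2} (clean representative), atoroidality from the first half of the proof of Theorem~\ref{part2} (no periodic cyclic free factor), and the observation that irreducibility forbids $\phi(F)$ from lying in a proper free factor. Your explicit unpacking of that last point as the $k=1$ case of the reducibility definition is a correct elaboration of a step the paper leaves implicit.
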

\begin{proof}
By Corollary~\ref{clean2}, it is represented by a clean map. By the first half of the proof of Theorem~\ref{part2}, $\phi$ is atoroidal. In particular, it has no periodic cyclic free factor. Finally, irreducibility implies its image $\phi(F)$ is not contained in a proper free factor. The conclusion follows from Theorem~\ref{propKap}.
\end{proof}

\section{Irreducible Nonsurjective Endomorphisms are Hyperbolic}\label{hyperbolic}

The goal of this final section is to prove that the mapping tori of irreducible nonsurjective endomorphisms are word-hyperbolic.

\begin{defn}Let $\phi: F \to F$ be an injective endomorphism. Then the {\bf ascending HNN extension/mapping torus} of $\phi$ is given by the presentation:
\[ F*_\phi = \langle F, t~ | ~ t^{-1}x t = \phi(x), \forall x \in F \rangle \]
\end{defn}

Thurston's hyperbolization theorem gives the correspondence between the geometry of $3$-manifolds that fiber over a circle and the dynamics of their monodromies \cite{Thu82} and Brinkmann generalized this to free-by-cylic groups $F \rtimes \mathbb Z$ \cite{Bri00}. The following theorem, the main result of \cite{JPM}, is a  partial generalization to ascending HNN extensions $F*_\phi$.

\begin{thm}[{\cite[Theorem~6.3]{JPM}}]\label{mythm}Suppose $\phi:F \to F$ is represented by an immersion. Then $F*_\phi$ is word-hyperbolic if and only if there are no $d,n \ge 1$ and $1 \neq a \in F$ such that $\phi^n(a)$ is conjugate to $a^d$ in $F$.\end{thm}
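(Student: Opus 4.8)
The plan is to prove the two implications separately, with the reverse (obstruction) direction routine and the forward (hyperbolicity) direction carrying all the weight. For necessity, suppose there are $d,n \ge 1$ and $1 \ne a \in F$ with $\phi^n(a) = g a^d g^{-1}$. The defining relation of $F*_\phi$ gives $t^{-n} a t^n = \phi^n(a)$, so setting $s = t^n g$ we obtain $s^{-1} a s = a^d$. Since $a$ has infinite order and $s$ maps to $n \ne 0$ under the retraction $F*_\phi \to \mathbb Z$ (with $t \mapsto 1$ and $F \mapsto 0$), the subgroup $\langle a, s\rangle$ is isomorphic to $BS(1,d)$; I would rule out extra relations using Britton's lemma for the HNN extension, equivalently the action on the Bass--Serre tree. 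When $d = 1$ this is $\mathbb Z^2$, and when $d \ge 2$ the relation $s^{-k} a s^k = a^{d^k}$ makes the cyclic subgroup $\langle a\rangle$ exponentially distorted. In either case no word-hyperbolic group can contain such a subgroup, so $F*_\phi$ is not word-hyperbolic.

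For sufficiency, assume no such $a,d,n$ exist. I would model $F*_\phi$ by the mapping-torus $2$-complex of the immersion $f:\Gamma \to \Gamma$ and view $F*_\phi$ as the fundamental group of an ascending HNN extension acting on its Bass--Serre tree with vertex stabilizers conjugate to the (hyperbolic) group $F$. Hyperbolicity would then follow from a combination theorem of Bestvina--Feighn type, whose only nontrivial hypothesis is the hallways/annuli \emph{flaring condition}: there is a uniform exponential constant so that the core length along any vertical annulus grows, away from its minimum, in \emph{both} directions along the $t$-coordinate.

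The heart of the argument is establishing this flaring, and the immersion hypothesis is exactly what makes it tractable. Because $f$ is locally injective, a reduced loop stays reduced under $f$, so no cancellation occurs under forward iteration and the length $l([f^k(\rho)])$ is nondecreasing in $k$. This yields a clean dichotomy for each nontrivial conjugacy class $[\rho]$: either $l([f^k(\rho)]) \to \infty$ (``expanding'') or it is eventually constant, in which case $f$ acts on the loop by an eventual graph automorphism and some iterate fixes $[\rho]$, i.e.\ $\phi^m(a) \sim a$ --- the excluded $d = 1$ case. Hence every class is expanding, which supplies flaring in the forward direction. The subtler point is backward flaring: an annulus failing to flare backward would have core curves that are proper roots of their forward images, and unwinding this self-similarity produces a class $a$ with $\phi^n(a) \sim a^d$ for some $d \ge 2$, precisely the excluded case. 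Ruling these out forces every vertical annulus to flare in both directions.

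The main obstacle I anticipate is upgrading these qualitative statements to a single uniform flaring constant independent of the annulus, as the combination theorem requires. I expect to extract the forward exponential rate from the Perron--Frobenius eigenvalue $\lambda_\phi > 1$ together with the bounded-cancellation constant, and to control the backward direction by a finiteness/pigeonhole argument over conjugacy classes of bounded length, showing that any failure of uniform flaring forces an honest relation $\phi^n(a) \sim a^d$. The genuine technical hurdle is the asymmetry of the ascending HNN extension --- forward iteration is $\phi$, but backward iteration is only partially defined since $\phi$ need not be surjective --- which is exactly why one must invoke the ascending-HNN version of the combination theorem rather than the symmetric free-by-cyclic version.
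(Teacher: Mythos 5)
This theorem is not proved in the present paper at all --- it is imported verbatim from the author's earlier work \cite{JPM} --- but your outline matches the strategy of that cited proof: for necessity, exhibiting $BS(1,d)$ subgroups (with $\mathbb Z^2$ as the $d=1$ case) via $s = t^n g$ and Britton's lemma, and for sufficiency, verifying the annuli-flaring hypothesis of the Bestvina--Feighn combination theorem for the mapping torus of the immersion, with the genuine work being the uniform flaring estimate and the handling of the asymmetry of backward iteration. So your proposal is a correct plan along essentially the same lines as the proof in \cite{JPM}.
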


Remarkably, this theorem combined with Theorem~\ref{part2} and Proposition \ref{invSbgrp} implies that irreducible nonsurjective endomorphisms have word-hyperbolic mapping tori.

\begin{thm}\label{hypthm} If $\phi:F \to F$ is represented by a clean immersion, then $F*_\phi$ is word-hyperbolic. In particular, if $\phi$ is nonsurjective and irreducible, then $F*_\phi$ is word-hyperbolic. \end{thm}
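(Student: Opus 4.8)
The plan is to reduce Theorem~\ref{hypthm} to the hyperbolicity criterion of Theorem~\ref{mythm}, whose hypothesis is exactly that $\phi$ be represented by an immersion. Since we are given a clean immersion $f:\Gamma \to \Gamma$ representing $\phi$, the hypothesis of Theorem~\ref{mythm} is immediately satisfied, so it suffices to verify the dynamical condition: there are no integers $d, n \ge 1$ and nontrivial element $a \in F$ with $\phi^n(a)$ conjugate to $a^d$. First I would recast this condition geometrically. The element $a$ determines an immersed loop in $\Gamma$, and the relation $\phi^n(a) \sim a^d$ says that $\phi$ eventually maps the conjugacy class $[a]$ to a proper power of itself; the cyclic subgroup $H = \langle a \rangle$ (or a conjugate) would then be a finitely generated subgroup with $\phi^n(H) \le H$ up to conjugacy, carrying a distinguished conjugacy class.

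The heart of the argument is to rule out such $a$ using the subgroup-invariance machinery of Section~\ref{iwip}, specifically Proposition~\ref{invSbgrp}. Suppose for contradiction that $\phi^n(a) \sim a^d$ for some $d, n \ge 1$ and $1 \neq a$. Passing to the conjugate so that $H = \langle a \rangle$ satisfies $\phi^n(H) \le H$, I would want to apply Proposition~\ref{invSbgrp} to $\phi^n$ and the cyclic subgroup $H$. To do so I must supply its hypotheses: $\phi^n$ is injective and (since $\phi$ is clean, hence so are all its iterates) represented by a clean map, and $H$ is finitely generated with $\phi^n(H) \le H$. The remaining hypothesis is that $H$ contain a $\phi$-expanding conjugacy class. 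This is where the relation $\phi^n(a)\sim a^d$ cuts both ways: if $d \ge 2$, the length of $[f^{n k}(\rho_a)]$ for the loop $\rho_a$ representing $a$ grows like $d^k$, so $[a]$ is genuinely $\phi$-expanding and Proposition~\ref{invSbgrp} applies, yielding $[\phi^{nk}(F) : \phi^{nk}(F) \cap H] < \infty$ for some $k$; but $\phi^{nk}(F) \cap H$ is cyclic, hence of infinite index in the non-cyclic group $\phi^{nk}(F) \cong F$ (recall $F$ has rank $\ge 2$), a contradiction. If instead $d = 1$, then $[a]$ is a periodic conjugacy class, i.e.\ $\phi$ is not atoroidal; I would exclude this case by the fact, established in the first half of the proof of Theorem~\ref{part2}, that an irreducible nonsurjective endomorphism is atoroidal, so no nontrivial periodic conjugacy class exists. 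For the general clean-immersion statement (not assuming nonsurjectivity), atoroidality must instead be extracted directly: a clean immersion is an expanding irreducible train track, so every conjugacy class grows exponentially under iteration and none can be periodic, again forcing $d \ge 2$ and reducing to the previous case.

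The main obstacle I anticipate is the bookkeeping around conjugation and iteration: Proposition~\ref{invSbgrp} is stated for genuine inclusions $\phi(H) \le H$, whereas the relation $\phi^n(a) \sim a^d$ only gives invariance up to conjugacy, so I must first replace $H$ by an honest conjugate $gHg^{-1}$ on which an inner-twisted power $i_g \circ \phi^n$ restricts as an endomorphism, and confirm that $i_g \circ \phi^n$ is still clean (true, since $i_g$ is an automorphism and cleanness is a property of the outermorphism). A secondary subtlety is verifying that the expanding hypothesis holds in the correct sense — the loop $\rho_a$ must lift to $S(H)$ and be $f$-expanding — but this follows from the exponential growth of lengths under the clean train track together with $d \ge 2$. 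Once these identifications are pinned down, the finite-index conclusion of Proposition~\ref{invSbgrp} clashes with the cyclicity of $H$, completing the contradiction; the second sentence of the theorem then follows since an irreducible nonsurjective $\phi$ is represented by a clean immersion by Theorem~\ref{part2}.
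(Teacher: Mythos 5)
Your proposal is correct and follows essentially the same route as the paper: reduce to the criterion of Theorem~\ref{mythm}, apply Proposition~\ref{invSbgrp} to the cyclic subgroup $H=\langle a\rangle$ invariant under $i_g\circ\phi^n$, and derive a contradiction from a cyclic subgroup having finite-index intersection in a noncyclic free group, with the second statement following from Theorem~\ref{part2}. The only cosmetic difference is your case split on $d$; the paper handles $d=1$ and $d\ge 2$ uniformly by observing at the outset that a clean immersion makes every nontrivial conjugacy class $\phi$-expanding.
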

\begin{proof} As $\phi$ is represented by a clean immersion, every nontrivial conjugacy class is $\phi$-expanding. Suppose $F*_\phi$ were not word-hyperbolic. By Theorem~\ref{mythm}, there exists a nontrivial element $a \in F$, an element $g \in F$, and integers $d, n \ge 1$ such that $\phi^n(a) = g a^d g^{-1}$. If we let $H = \langle a \rangle$, then $i_g \circ \phi^n(H) \le H$. By Proposition~\ref{invSbgrp}, there is a $k \ge 0$ such that $ (i_g\circ\phi^n)^k(F) \cap H$ has finite index in $(i_g \circ \phi^n)^k(F)$. But this is a contradiction as $H$ cannot be cyclic and have finite index intersection in a noncyclic free group. Therefore, $F*_\phi$ must be word-hyperbolic. The second statement of the theorem follows from Theorem~\ref{part2}.\end{proof}

On the other hand, there are fully irreducible automorphisms whose corresponding free-by-cylic groups are not word-hyperbolic. In this case, Bestvina-Handel showed that the automorphisms are induced by pseudo-Anosov homeomorphisms on once-punctured surfaces \cite[Proposition~4.5]{BH92}. By Thurston's hyperbolization theorem, the corresponding free-by-cyclic groups are fundamental groups of hyperbolic $3$-manifolds that fiber over a circle.

\bibliography{refs}

\begin{thebibliography}{10}

\bibitem{BR97}
R.~B. Bapat and T.~E.~S. Raghavan.
\newblock {\em Nonnegative matrices and applications}, volume~64 of {\em
  Encyclopedia of Mathematics and its Applications}.
\newblock Cambridge University Press, Cambridge, 1997.

\bibitem{BFH97e}
M.~Bestvina, M.~Feighn, and M.~Handel.
\newblock Erratum to: ``{L}aminations, trees, and irreducible automorphisms of
  free groups'' [{G}eom. {F}unct. {A}nal. 7(2):215--244, 1997].
\newblock {\em Geom. Funct. Anal.}, 7(6):1143, 1997.

\bibitem{BFH97}
Mladen Bestvina, Mark Feighn, and Michael Handel.
\newblock Laminations, trees, and irreducible automorphisms of free groups.
\newblock {\em Geom. Funct. Anal.}, 7(2):215--244, 1997.

\bibitem{BH92}
Mladen Bestvina and Michael Handel.
\newblock Train tracks and automorphisms of free groups.
\newblock {\em Ann. of Math. (2)}, 135(1):1--51, 1992.

\bibitem{BNS87}
Robert Bieri, Walter~D. Neumann, and Ralph Strebel.
\newblock A geometric invariant of discrete groups.
\newblock {\em Invent. Math.}, 90(3):451--477, 1987.

\bibitem{Bri00}
Peter Brinkmann.
\newblock Hyperbolic automorphisms of free groups.
\newblock {\em Geom. Funct. Anal.}, 10(5):1071--1089, 2000.

\bibitem{CV86}
Marc Culler and Karen Vogtmann.
\newblock Moduli of graphs and automorphisms of free groups.
\newblock {\em Invent. Math.}, 84(1):91--119, 1986.

\bibitem{DV96}
Warren Dicks and Enric Ventura.
\newblock {\em The group fixed by a family of injective endomorphisms of a free
  group}, volume 195 of {\em Contemporary Mathematics}.
\newblock American Mathematical Society, Providence, RI, 1996.

\bibitem{DKL15}
Spencer Dowdall, Ilya Kapovich, and Christopher~J. Leininger.
\newblock Dynamics on free-by-cyclic groups.
\newblock {\em Geom. Topol.}, 19(5):2801--2899, 2015.

\bibitem{DKL17}
Spencer Dowdall, Ilya Kapovich, and Christopher~J. Leininger.
\newblock Endomorphisms, train track maps, and fully irreducible monodromies.
\newblock {\em Groups Geom. Dyn.}, 11(4):1179--1200, 2017.

\bibitem{Kap14}
Ilya Kapovich.
\newblock Algorithmic detectability of iwip automorphisms.
\newblock {\em Bull. Lond. Math. Soc.}, 46(2):279--290, 2014.

\bibitem{Kap19}
Ilya Kapovich.
\newblock Detecting fully irreducible automorphisms: a polynomial time
  algorithm.
\newblock {\em Exp. Math.}, 28(1):24--38, 2019.
\newblock With an appendix by Mark C. Bell.

\bibitem{LL03}
Gilbert Levitt and Martin Lustig.
\newblock Irreducible automorphisms of {$F_n$} have north-south dynamics on
  compactified outer space.
\newblock {\em J. Inst. Math. Jussieu}, 2(1):59--72, 2003.

\bibitem{JPM2}
Jean~Pierre {Mutanguha}.
\newblock {Irreducibility of a Free Group Endomorphism is a Mapping Torus
  Invariant}.
\newblock {\em arXiv e-prints}, Oct 2019.
\newblock \href{https://arxiv.org/abs/1910.04285}{arXiv:1910.04285}.

\bibitem{JPM}
Jean~Pierre {Mutanguha}.
\newblock {Hyperbolic Immersions of Free Groups}.
\newblock {\em Groups Geom. Dyn.}, to appear.
\newblock \href{https://arxiv.org/abs/1809.04761}{arxiv:1809.04761}.

\bibitem{Pfaff}
Catherine Pfaff.
\newblock Ideal {W}hitehead graphs in {$Out(F_r)$} {II}: the complete graph in
  each rank.
\newblock {\em J. Homotopy Relat. Struct.}, 10(2):275--301, 2015.

\bibitem{Rey}
Patrick {Reynolds}.
\newblock {Dynamics of Irreducible Endomorphisms of $F_n$}.
\newblock {\em ArXiv e-prints}, August 2010.
\newblock \href{https://arxiv.org/abs/1008.3659}{arxiv:1008.3659}.

\bibitem{St83}
John~R. Stallings.
\newblock Topology of finite graphs.
\newblock {\em Invent. Math.}, 71(3):551--565, 1983.

\bibitem{Thu82}
William~P. Thurston.
\newblock Three-dimensional manifolds, {K}leinian groups and hyperbolic
  geometry.
\newblock {\em Bull. Amer. Math. Soc. (N.S.)}, 6(3):357--381, 1982.

\bibitem{Thu88}
William~P. Thurston.
\newblock On the geometry and dynamics of diffeomorphisms of surfaces.
\newblock {\em Bull. Amer. Math. Soc. (N.S.)}, 19(2):417--431, 1988.

\bibitem{Vogt02}
Karen Vogtmann.
\newblock Automorphisms of free groups and outer space.
\newblock In {\em Proceedings of the {C}onference on {G}eometric and
  {C}ombinatorial {G}roup {T}heory, {P}art {I} ({H}aifa, 2000)}, volume~94,
  pages 1--31, 2002.

\bibitem{Vogt17}
Karen Vogtmann.
\newblock Contractibility of outer space: reprise.
\newblock In {\em Hyperbolic geometry and geometric group theory}, volume~73 of
  {\em Adv. Stud. Pure Math.}, pages 265--280. Math. Soc. Japan, Tokyo, 2017.

\end{thebibliography}
\bibliographystyle{plain}
\end{document}